\newtheorem{theorem}{Theorem}
\newtheorem{lemme}{Lemma}
\newtheorem{Cor}{Corollary}
\newtheorem{proposition}{Proposition}
\newtheorem{remarque}{Remark}
\newcommand{\dd}{\mathrm{d}} 
\newcommand{\F}{\mathcal{F}}
\newcommand{\R}{\mathbb{R}}
\newcommand{\Z}{\mathbb{Z}}
\newcommand{\N}{\mathbb{N}}
\renewcommand{\P}{\mathbb{P}}
\newcommand{\Lp}{\mathcal{L}p}
\newcommand{\sL}{\mathbb{L}}
\renewcommand{\SS}{\mathbb{S}}
\newcommand{\bx}{\mathbf{x}}
\newcommand{\by}{\mathbf{y}}
\newcommand{\bX}{\mathbf{X}}
\DeclareMathOperator\Var{Var}
\DeclareMathOperator\E{E}
\DeclareMathOperator\MSE{MSE}
\newcounter{tictac}
\newenvironment{fleuve}{
   \begin{list}{$\textbf{\emph{\arabic{tictac})}}$ }{\usecounter{tictac} \leftmargin 1cm\labelwidth
2em}}{\end{list}}
\def\1{\,\rlap{\mbox{\small\rm 1}}\kern.15em 1}
\def\build#1_#2^#3{\mathrel{\mathop{\kern 0pt#1}\limits_{#2}^{#3}}}
\def\tend#1#2{\build\hbox to 12mm{\rightarrowfill}_{#1\rightarrow #2}^{a.s.}}
\def\converge#1#2#3{\build\hbox to
15mm{\rightarrowfill}_{#1\rightarrow #2}^{\hbox{\scriptsize #3}}}
\title{Weak and Strong Consistency of non-parametric estimate of potential function for stationary and isotropic pairwise interaction point process}
\author{Nadia Morsli\\
nadia.morsli@imag.fr\\
Laboratory Jean Kuntzmann, Grenoble University, France.}
\date{}
\begin{document}
\maketitle

{\renewcommand\abstractname{Abstract}
\begin{abstract}
A method is proposed for estimating the potential function of a non-parametric estimator for stationary and isotropic pairwise interaction point process. 
The relation between a pair potential and the corresponding Papangelou conditional intensity is considered. Consistency
 and strong consistency of non-parametric estimate are proved in case of finite-range interaction potential.
\end{abstract}

\textbf{Keywords:}  
Non parametric estimation, kernel-type estimator,  pairwise interaction point 
process, Papangelou conditional intensity, consistency, rates of strong uniform consistency.
\thispagestyle{empty}
\newpage
\section{Introduction}

Gibbs point processes are a natural class of models for point patterns exhibiting interactions between the points.
 By far the most widely applied form in practical analysis is that of pairwise interaction, where the scale and 
strength of interaction between two points are determined by a so-called pair potential function. For a stationary 
and isotropic process the pair potential is a function of the distance between the two points.
Fields of applications for point processes are  image processing, analysis of the structure of tissues in medical sciences, 
 forestry (Mat\'ern \cite{Matern-B86}), ecology (Diggle \cite{B-Dig03}), 
spatial epidemiology (Lawson \cite{Lawson-A.B01})and  astrophysics (Neyman and Scott \cite{J-Neyman58}).

Pairwise interaction point process densities are intractable as the normalizing constant is unknown and/or extremely complicated to approximate. However, we
can resort to estimates of parameters using the conditional intensity.
In this paper, we suggest a new non-parametric  estimate 
of the pair potential function for stationary and isotropic pairwise interaction point process specified by a 
Papangelou conditional intensity  on increasing regions single realization is observed. In this cas a point process
 is defined as a random locally-finite counting measure on the $d$-dimensional Euclidean space $\R^d$ . 
Consistency and strong consistency of the resulting estimator  are established.

To our knowledge only one attempt to solve the problem of non-parametric 
estimation of the pair correlation function and its
approximate relation to the pair potential through the Percus Yevick equation 
(Diggle et al. \cite{A-DigGatSti87}). The approximation is a result of a cluster expansion method, and it is 
accurate only for sparse data. 
Many attempts have been tried to estimate the
potential function from point pattern data in a parametric framework ;
maximization of likelihood approximations (Ogata and Tanemura \cite{Ogata81}, Ogata and Tanemura \cite{Ogata84}, 
Penttinen \cite{A-Penttien84}), pseudolikelihood maximization (Besag et al. \cite{Besag82}, Jensen and M{\o}ller \cite{A-JenMol91})
 and also some ad hoc methods  (Strauss \cite{Strauss75}, Ripley \cite{A-RipKel77}, 
Hanisch and Stoyan \cite{Hanishch86},  Diggle and Gratton \cite{Gratton84}, 
Fiksel \cite{T.Fiksel2}, Takacs \cite{R.Takacs}, Billiot and  Goulard \cite{Billiot-Goulard2001}).

Our paper is organized as follows. Section \ref{sub} introduces basic notation and definitions.
 In Section \ref{examples}, we briefly present some models satisfying the assumptions needed to prove our asymptotic results.
In Section \ref{result}, we present our main results. Consistency of non-parametric estimator is proved in Section \ref{mean}, it is based on the knowledge of Papangelou conditional intensity and 
the iterated Georgii-Nguyen-Zessin formula. Using Orlicz spaces we can obtain a  strong consistency of non-parametric estimator 
in Section \ref{rates}.
\section{Basic notation and definitions}\label{sub}
Throughout the paper we adopt the following notation.
We denote the space of locally finite  point configurations in $\R^d$ by $N_{lf}$.
The volume of a bounded Borel set $W$ of $\R^d$ is denoted by $|W|$ and
$o$ denotes the origin.  For all finite subset ${\Gamma}$ of $\Z^d$, we denote  $|{\Gamma}|$ the number of elements in $\Gamma$. $||\cdot||$ denotes Euclidean distance on $\R^d$.
$\sigma_d=\frac{2\pi^{d/2}}{\Gamma(d/2)}$ is the measure of the unit sphere in $\R^d$. Let $\SS^{d-1}$  be the unit sphere in $\R^d$.

Papangelou conditional intensity (M{\o}ller and Waagepetersen \cite{B-MolWaa04}) of pairwise interaction point process has the form
 \[
 \lambda(u,\bx)=\gamma_0(u)\exp\bigg({-\sum_{v\in \bx\backslash u}\gamma_0(\{u,v\})}\bigg).
 \]
If $\gamma_0(u)=\beta$ is a constant and $\gamma_0(\{u , v \}) =\gamma(||u-v||)$ is invariant under translations 
and rotations, then a pairwise interaction point process is said to be stationary and isotropic or homogeneous. 
 The  Papangelou conditional intensity can be interpreted as 
follows: for any $u\in \R^d$ and $\bx \in N_{lf}$, $\lambda(u,\bx)\dd u$ corresponds to the conditional probability of observing a point in a ball of volume $\dd u$ around $u$ given the rest of the point process is $\bx$. 
Fortunately does not contain a normalising factor.
\paragraph{}
  For convenience, throughout in this paper, we consider stationary and isotropic pairwise interaction point process. Then
its Papangelou conditional intensity at a location $u$ is given by
 \begin{equation}\label{modelle}
\lambda(u,\bx)= \beta^\star \exp\bigg({-\sum_{v\in \bx\backslash u}\gamma(||v-u ||)}\bigg), \ \forall u\in \R^d,  \bx\in N_{lf}
 \end{equation}
where $\beta^\star$ is the true value of the Poisson intensity parameter
, $\gamma$ is called the pair potential, a name that originates in physics: it measures the potential energy caused
by the interaction among pairs of points $(u,v)$ as a function of their distance $||v-u||$.
Usually a finite range of interaction, $R$, is assumed such that 
\begin{equation}\label{RG}
\gamma(||v-u||)=0 \quad \mbox {whenever} \  ||v-u||>R.
\end{equation}
We assume that $\gamma(||v-u||)>0$ for $||v-u|| \leq R$, so that typical realizations will be more or less regular compared to a 
completely random arrangement.
The pairwise interaction between points may also be described in terms of
the pair potential function $\gamma$ into the interaction function $\varPhi=\exp({-\gamma}).$
 For $\varPhi > 1$, $\lambda(u,\bx)$ is increasing in $\bx$. For $\varPhi < 1$, $\lambda(u,\bx)$ is decreasing in $\bx$ 
(the repulsive case). It can be computed for the case $\varPhi = 1$  which corresponds to the homogeneous Poisson point process with
with intensity $\beta^\star$.
\section{Examples of Papangelou conditional intensity}\label{examples}
Examples of conditional intensities are presented in  Baddeley et al \cite{A-BadTurMolHaz05}, M{\o}ller and
Waagepetersen (\cite{B-MolWaa04}, \cite{A-MolWaa07}). The
following presents some examples which have been applied in various contexts and  satisfying the assumptions needed to prove our asymptotic results.
\begin{enumerate}
%  \item 
% The area-interaction point process $\bX$ has Papangelou conditional intensity
% \[
% \lambda(u,\bx)= \beta \varPhi^{A(\bx\cup u)-A(\bx \setminus u)}
% \]
% where $A(\bx)=|  \cup_{v\in \bx} B(v,R/2)|$ and  $\beta, \gamma, R$ are positive parameters, $|\cdot|$
%  denotes area.

\item A special case of pairwise interaction is the Strauss process. It has Papangelou conditional intensity
\[
 \lambda(u,\bx)=\beta\varPhi^{ n_{[0,R]}(u,\bx \setminus u)} 
 \]
where $\beta> 0, 0 \leq \varPhi \leq 1$ and  $n_{[0,R]}(u,\bx)= \displaystyle\sum_{v\in \bx} 1\!\!1 ( \|v-u\|\leq R)$
is the number of pairs in $\bx$ with distance not greater than $R$.

\item 
 Piecewise Strauss point process. 
%Defined by 
%Generalization of the Strauss point process by sustituting the indicator function with a step function, 
\[
  \lambda(u,\bx)=\beta\prod_{j=1}^p  \varPhi_j ^{n_{[R_{j-1},R_j]}(u,\bx \setminus u)}
\]
 where $\beta>0$, $0\leq \varPhi_j \leq 1, n_{[R_{j-1},R_j]}(u,\bx)=\displaystyle\sum_{v\in \bx} 1\!\!1(\|v-u\|\in [R_{j-1},R_j])$ and $R_0=0<R_1<\ldots<R_p=R<\infty$.
 
\item Triplets point process.  
\[
 \lambda(u,\bx)=\beta\varPhi^{s_{[0,R]}(\bx\cup u)-s_{[0,R]}(\bx\setminus u)}
\]
 where $\beta>0$, $0\leq\varPhi\leq 1$ and $s_{[0,R]}(\bx)$ is the number of unordered triplets that are closer than $R$. 
\item
 Lennard-Jones model
\[
  \lambda(u,\bx)= \beta \prod_{v\in \bx \setminus u} \varPhi(\| v-u\|\big )
  \]
 with $\log \varPhi(r)=  \big( \theta^6 r^{-6}- \theta^{12}r^{-12}  \big)1\!\!1_{]0,R]}(r)$, for  $r=||v-u||$, where
 $\theta>0$ and $\beta>0$ ares parameters.
% \item Geyer saturation point process  with saturation threshold $s$
% \[
%    \lambda(u,\bx)= \beta \gamma^{t(\bx\cup u) -t(\bx \setminus u)}
% \]
% where   $\beta > 0, \gamma \in [0,1], s\geq 1$ and $t(\bx)= \sum_{v\in \bx}\min(s,n_{[0,R/2]}(v,\bx\setminus v))$.
\end{enumerate}

\section{Main results}\label{result}
%Our main result is the following
% Suppose the domain of observation $W_n\subset \R^s$ is a fixed,
% known region of $d$-dimensional space with finite positive volume.

 Suppose that a single realization $\bx$ of a point process $\bX$ is observed in a bounded  
 window $W_n$ where ($W_n )_{n\geq 1}$ is a sequence of cubes growing up to $\R^d$.
  Throughout in this paper, $\widetilde{h}$ is a non-negative measurable function 
defined for all $u\in \R^d$, $ \bx\in N_{lf}$ by
\[
\widetilde{h}(u,\bx)=1\!\!1\left( \inf_{v\in \bx}||v-u||>  R\right)= 1\!\!1\left(d(u,\bx)> R \right),
\] 
note that
\[
\widetilde{F}(o,rv)=\E[\widetilde{h}(o,\bx)\widetilde{h}(rv,\bx)]
\]
and 
\[
 J(r)=\int_{\SS^{d-1}}\widetilde{F}(o,rv)\dd v.
\] 
 
 \paragraph{}
To estimate the function $\beta^{\star2}  J(r)\varPhi(r)$,  we introduce edge-corrected kernel-type estimator $\widehat{R}_n(r)$  defined by
%\hspace*{-44cm}
 {\small
\begin{equation}\label{range24}
 \widehat{R}_n(r)= 
\frac{1}{b_n|W_{n\ominus 2R}|\sigma_d}
\sum_{\begin{subarray}{c} u,v \in \bX \\ ||v-u||\leq  R\end{subarray}}^{\neq} \!\!\!\!\!
\frac{1\!\!1_{W_{n\ominus 2R}}(u)}{||v-u||^{d-1}}\widetilde{h}(u,\bX \setminus \{u,v\})\widetilde{h}(v,\bX\setminus \{u,v\}) K_1\bigg(\frac{||v-u||-r}{b_n}\bigg).
\end{equation}
}
$\ominus$ will denote Minkowski substraction,  with the convention that
 
$
W_{n\ominus 2R}= W_{n}\ominus B(u,2R)=\{u\in W_n: ||u-v||\leq 2R  \quad \mbox {for all} \quad v\in W_n\}
$
denotes the $2R$-interior of the cubes $W_n$, with Lebesgue measure  $|W_{n\ominus 2R}|>0$.
 $\sum^{\neq}$ signifies summation over distinct pairs.
$K_1: \R\rightarrow \R$ is an univariate kernel function
 associated with a sequence $(b_n)_{n\geq 1}$ of  bandwidths  satisfying the following:
\paragraph{}
\textbf{Condition $K(1,\alpha):$} 
The sequence of bandwidths  $b_n>0$ for $n\geq 1$,  is chosen such that

\[\lim_{n\rightarrow\infty} b_n=0 \quad \mbox {and} \quad  \lim_{n\rightarrow\infty}b_n|W_{n\ominus 2R}|=\infty.\]
 The kernel function $K_1: \R\longrightarrow \R$ is non-negative and bounded with bounded support, such that:
\begin{equation*}
\int_\R K_1(u) \dd u=1, \int_\R u^{j}K_1(u)\dd u=0, j=0, 1, . . . , \alpha-1, \quad \mbox{for} \ \alpha\geq 2.
\end{equation*}
To estimate the function $\beta^\star J(r)$  we introduce
 empiric estimator $\widehat{J}_n(r)$  defined by
\begin{equation}\label{estima25}
  \widehat{J}_n(r)= \frac{1}{|W_{n\ominus 2R}|} \sum_{u \in \bX} 1\!\!1_{W_{n\ominus 2R}}(u)\widetilde{h}(u,\bX \setminus \{u\})h^\star (u,\bX\setminus \{u\}), 
\end{equation}
where $h^\star(u,\bx)= \int_{\SS^{d-1}}\widetilde{h}(rv-u,\bx)\dd v$. 
Using the spatial ergodic theorem of Nguyen and Zessin \cite{A-NguZes79}, estimator \eqref{estima25} turn out to be unbiased and strongly consistent. 
The natural estimator of  Poisson intensity $\beta^{\star}$ is
\begin{equation}\label{eq:defEst}
\widehat{\beta}_n=\frac{\sum_{u \in \bX} 1\!\!1_{\Lambda_{n,R}}(u)\widetilde{h}(u,\bX \setminus \{u\})}
{\int_{\Lambda_{n,R}}\widetilde{h}(u,\bX)\dd u}.
\end{equation}
This estimator turn out to be unbiased and strongly consistent and results on asymptotic normality were obtained by
 Morsli et al. \cite{JFMN}.

Plugging in the above estimator \eqref{estima25} and \eqref{eq:defEst},
then the interaction function $\varPhi(r)=\exp(-\gamma(r))$ for $r\in (0,R]$ can be estimated using edge-corrected 
non-parametric estimate   by
\begin{equation}\label{range23}
 \widehat{\varPhi}_n(r)=\frac{\widehat{R}_n(r)}{\widehat{\beta}_n\widehat{J}_n(r)}.
\end{equation}
The strong consistency of the estimators \eqref{estima25} and \eqref{eq:defEst} implies the following:
\begin{proposition}
Let $\gamma$ be pairwise interaction potential
defined in  \eqref{modelle} satisfying condition \eqref{RG}.
Let $K_1$ kernel function satisfying Condition $K(1,\alpha)$ and the function $ J(r)exp(-\gamma(r))$  has bounded 
and continuous partial derivatives of order $\alpha$ for all $\alpha\geq 1$
in $(r-\delta, r+\delta)$ for some $\delta > 0$. Then
as $n\rightarrow \infty$
\[
\widehat{\varPhi}_n(r){\longrightarrow} \exp(-\gamma(r)) \quad\textrm{in probability}\quad \P \quad\textrm{(resp.} \P \textrm{-a.s.)} \quad\textrm{iff}
\]
\[
\widehat{R}_n(r){\longrightarrow} \beta^{\star^2}J(r)\exp(-\gamma(r)) \quad\textrm{in probability}\quad \P \quad\textrm{(resp.} \P \textrm{- a.s.)}.
\]
\end{proposition}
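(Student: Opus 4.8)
The plan is to reduce the claimed equivalence to the already-established strong consistency of the auxiliary estimators $\widehat{\beta}_n$ and $\widehat{J}_n(r)$ together with one elementary algebraic identity, exploiting that $\widehat{\varPhi}_n(r)$ is by definition the ratio $\widehat{R}_n(r)/(\widehat{\beta}_n\widehat{J}_n(r))$ from \eqref{range23}. First I would record the two facts that make the denominator well behaved: $\beta^\star>0$ by hypothesis, and $J(r)>0$, since $\widetilde{F}(o,rv)=\E[\widetilde{h}(o,\bX)\widetilde{h}(rv,\bX)]$ is the probability of a non-degenerate event (no point of $\bX$ lying within distance $R$ of either $o$ or $rv$), hence strictly positive, so that $J(r)=\int_{\SS^{d-1}}\widetilde{F}(o,rv)\,\dd v>0$.

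Next I invoke the results quoted in the text: the Nguyen--Zessin spatial ergodic theorem gives $\widehat{J}_n(r)\to\beta^\star J(r)$ $\P$-a.s. (and in mean), and $\widehat{\beta}_n\to\beta^\star$ $\P$-a.s. (Morsli et al.). Hence the product $D_n:=\widehat{\beta}_n\widehat{J}_n(r)$ satisfies $D_n\to\beta^{\star 2}J(r)>0$ $\P$-a.s., and a fortiori in probability; in particular, off a $\P$-null set $D_n$ is bounded away from $0$ for all large $n$, so that $1/D_n\to(\beta^{\star 2}J(r))^{-1}$, a finite nonzero constant.

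The core of the argument is then the decomposition
\[
\widehat{\varPhi}_n(r)-\exp(-\gamma(r))
=\frac{\widehat{R}_n(r)-\beta^{\star 2}J(r)\exp(-\gamma(r))}{D_n}
+\exp(-\gamma(r))\Bigl(\frac{\beta^{\star 2}J(r)}{D_n}-1\Bigr).
\]
The second summand tends to $0$ $\P$-a.s. (resp. in probability) unconditionally, by the previous step. Since $1/D_n$ converges to the finite nonzero limit $(\beta^{\star 2}J(r))^{-1}$, the first summand tends to $0$ $\P$-a.s. (resp. in probability) if and only if its numerator does, i.e. if and only if $\widehat{R}_n(r)\to\beta^{\star 2}J(r)\exp(-\gamma(r))$ in the corresponding mode. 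This yields the ``iff'' in both the almost-sure and the in-probability versions; for the in-probability direction the passage back is again just Slutsky's lemma, writing $\widehat{R}_n(r)=\widehat{\varPhi}_n(r)\,D_n$ and multiplying the in-probability limit of $\widehat{\varPhi}_n(r)$ by the in-probability limit $\beta^{\star 2}J(r)$ of $D_n$.

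The argument is essentially routine; the only point requiring care is the one flagged above, namely guaranteeing that the limiting denominator $\beta^{\star 2}J(r)$ is strictly positive so that division is legitimate and $1/D_n$ converges, and, in the almost-sure case, restricting to the single probability-one event on which the three convergences $\widehat{\beta}_n\to\beta^\star$, $\widehat{J}_n(r)\to\beta^\star J(r)$ and ``$D_n$ eventually bounded away from $0$'' hold simultaneously. I remark that Condition $K(1,\alpha)$ and the smoothness hypothesis on $J(r)\exp(-\gamma(r))$ are not used in this reduction: they are listed because they are precisely the assumptions that, in the sections on mean-square and strong consistency, allow one to verify the right-hand convergence $\widehat{R}_n(r)\to\beta^{\star 2}J(r)\exp(-\gamma(r))$ itself.
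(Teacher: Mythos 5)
Your proof is correct and matches the paper's (implicit) argument: the paper offers no written proof, merely asserting that the proposition follows from the strong consistency of $\widehat{J}_n(r)$ and $\widehat{\beta}_n$, and your Slutsky-type decomposition with the positivity check $\beta^{\star 2}J(r)>0$ is exactly the fleshed-out version of that reduction. Your closing remark that Condition $K(1,\alpha)$ and the smoothness hypothesis play no role in this equivalence (only in establishing the right-hand convergence later) is also accurate.
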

The convergence in probability (consistency) for a wide class of point process will be discussed in Section \ref{mean}.
Conditions ensuring  uniform $\P$-a.s. convergence
of kernel-type estimator of $\widehat{R}_n(r)$ and the strong consistency $\widehat{\varPhi}_n(r)$ will be discussed in Section \ref{rates}.
\section{Consistoncy}\label{mean}
\subsection{Asymptotic behaviour mean squared error of the kernel-type estimator}
In this section we will derive bounds for the  mean squared error of the kernel estimator kernel-type estimator of 
 $\widehat{R}_n(r)$. We consider the mean square error of $\widehat{R}_n(r)$, 
$\MSE\big(\widehat{R}_n(r)\big)=\Var\big(\widehat{R}_n(r)\big) + \big(Biais(\widehat{R}_n(r)\big)^2$.
So convergence in $\MSE$ implies that as $n\rightarrow\infty$
$
\big(Biais(\widehat{R}_n(r)\big)^2=\big(\E \widehat{R}_n(r)-\beta^{\star 2}J(r)\exp(-\gamma(r))\big)^2\longrightarrow 0
$
and
$
\Var\big(\widehat{R}_n(r)\big)=\E\big(\widehat{R}_n(r)-\E\widehat{R}_n(r)\big)^2\longrightarrow 0.
$
Hence, $\widehat{R}_n(r)$ is consistent in the quadratic mean and hence consistent. 
For doing this, we first determine the asymptotic behaviour of $\E\widehat{R}_n(r)$ and $\Var \widehat{R}_n(r)$.
\begin{theorem}\label{esperance2} 
Let $\gamma$ be pairwise interaction potential
defined in  \eqref{modelle} satisfying condition \eqref{RG}.
Let $K_1$ kernel function satisfying Condition  $K(1,1)$. For all $r\in (0,R]$, we have
\[
\lim_{n\rightarrow \infty}\E\widehat{R}_n(r)= \beta^{\star 2}J(r)\exp(-\gamma(r)).
\]
If Condition $K(1,\alpha)$ is satisfied and the function $\exp(-\gamma (r))J(r)$  has bounded and continuous partial derivatives of order $\alpha$ 
in 
$(r-\delta, r+\delta)$ for some $\delta > 0$ and for all $\alpha\geq 1$. Then
\[
\E\widehat{R}_n(r)= \beta^{\star 2}J(r)\exp(-\gamma(r)) +\mathcal{O}(b^\alpha_n) \quad \mbox{as} \ \  n\rightarrow \infty.
\]
\end{theorem}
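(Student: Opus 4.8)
The plan is to compute $\E\widehat R_n(r)$ exactly via the iterated Georgii--Nguyen--Zessin (GNZ) formula, use the finite range of $\gamma$ to collapse the two--point Papangelou conditional intensity, and then recognise the outcome as a one--dimensional kernel convolution. Writing $\widehat R_n(r)=\tfrac{1}{b_n|W_{n\ominus 2R}|\sigma_d}\sum^{\neq}_{u,v\in\bX}f\big(u,v,\bX\setminus\{u,v\}\big)$ with
\[
f(u,v,\bx)=\ind{\|v-u\|\le R}\,\frac{\ind{W_{n\ominus 2R}}(u)}{\|v-u\|^{d-1}}\,\widetilde h(u,\bx)\,\widetilde h(v,\bx)\,K_1\!\Big(\tfrac{\|v-u\|-r}{b_n}\Big),
\]
the iterated GNZ formula gives
\[
\E\sum^{\neq}_{u,v\in\bX} f\big(u,v,\bX\setminus\{u,v\}\big)=\int_{\R^d}\!\int_{\R^d}\E\big[f(u,v,\bX)\,\lambda(u,\bX)\,\lambda(v,\bX\cup\{u\})\big]\,\dd u\,\dd v .
\]
For the model \eqref{modelle},
\[
\lambda(u,\bX)\,\lambda(v,\bX\cup\{u\})=\beta^{\star 2}\exp\!\big(-\gamma(\|v-u\|)\big)\exp\Big(-\!\!\sum_{w\in\bX}\gamma(\|w-u\|)\Big)\exp\Big(-\!\!\sum_{w\in\bX}\gamma(\|w-v\|)\Big).
\]
The crucial observation is that on the event $\{\widetilde h(u,\bX)=1\}$ every point of $\bX$ lies at distance $>R$ from $u$, so the finite--range condition \eqref{RG} forces $\sum_{w\in\bX}\gamma(\|w-u\|)=0$; hence $\widetilde h(u,\bX)\exp(-\sum_{w\in\bX}\gamma(\|w-u\|))=\widetilde h(u,\bX)$, and likewise for $v$. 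Consequently $\E\big[\widetilde h(u,\bX)\widetilde h(v,\bX)\lambda(u,\bX)\lambda(v,\bX\cup\{u\})\big]=\beta^{\star 2}\exp(-\gamma(\|v-u\|))\,\widetilde F(u,v)$, and stationarity gives $\widetilde F(u,v)=\widetilde F(o,v-u)$.

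Next I would change variables $w=v-u$: the remaining integration of $u$ over $W_{n\ominus 2R}$ contributes the factor $|W_{n\ominus 2R}|$, which cancels the one in the denominator, leaving
\[
\E\widehat R_n(r)=\frac{\beta^{\star 2}}{b_n\sigma_d}\int_{\R^d}\ind{\|w\|\le R}\,\frac{K_1\!\big((\|w\|-r)/b_n\big)}{\|w\|^{d-1}}\,\exp(-\gamma(\|w\|))\,\widetilde F(o,w)\,\dd w .
\]
Passing to polar coordinates $w=\rho v$ ($\rho\in(0,R]$, $v\in\SS^{d-1}$, $\dd w=\sigma_d\rho^{d-1}\dd\rho\,\dd v$), the $\sigma_d$ cancels the one in the normalisation, the Jacobian $\rho^{d-1}$ cancels $\|w\|^{-(d-1)}$, and $\int_{\SS^{d-1}}\widetilde F(o,\rho v)\,\dd v=J(\rho)$. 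With $\psi(s):=\exp(-\gamma(s))J(s)$ and $t=(\rho-r)/b_n$ this becomes
\[
\E\widehat R_n(r)=\frac{\beta^{\star 2}}{b_n}\int_0^R K_1\!\Big(\tfrac{\rho-r}{b_n}\Big)\psi(\rho)\,\dd\rho=\beta^{\star 2}\int_{-r/b_n}^{(R-r)/b_n}K_1(t)\,\psi(r+b_n t)\,\dd t .
\]

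For a fixed $r\in(0,R)$, once $b_n$ is small the (bounded) support of $K_1$ lies inside $(-r/b_n,(R-r)/b_n)$, so the range of integration may be replaced by $\R$; since $\psi$ is continuous near $r$, $K_1$ is bounded and $\int_\R K_1=1$, dominated convergence yields $\E\widehat R_n(r)\to\beta^{\star 2}\psi(r)=\beta^{\star 2}J(r)\exp(-\gamma(r))$, which is the first assertion. For the rate I would Taylor--expand, under the stated smoothness hypothesis, $\psi(r+b_n t)=\sum_{j=0}^{\alpha-1}\frac{\psi^{(j)}(r)}{j!}(b_n t)^j+R_\alpha(t)$ with $|R_\alpha(t)|\le\frac{1}{\alpha!}\|\psi^{(\alpha)}\|_\infty|b_n t|^\alpha$ on the support of $K_1$; the normalisation $\int K_1=1$ and the vanishing moments $\int t^j K_1(t)\,\dd t=0$ for $1\le j\le\alpha-1$ kill every term except $\psi(r)$, and the remainder is $\mathcal{O}\big(b_n^\alpha\int|t|^\alpha K_1(t)\,\dd t\big)=\mathcal{O}(b_n^\alpha)$, the integral being finite because $K_1$ has bounded support. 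Hence $\E\widehat R_n(r)=\beta^{\star 2}J(r)\exp(-\gamma(r))+\mathcal{O}(b_n^\alpha)$.

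Two verifications remain, both routine: the sum defining $\widehat R_n(r)$ is almost surely finite, so the iterated GNZ formula and Fubini apply, and the integral over $\R^d$ above converges absolutely because the singularity $\|w\|^{-(d-1)}$ is exactly absorbed by the polar Jacobian $\rho^{d-1}$. The one genuinely delicate point is the endpoint $r=R$: there the hard cut--off $\ind{\|w\|\le R}$ truncates the kernel, so the argument as written needs $r$ strictly interior to $(0,R)$ and $n$ large enough that $[\,r-b_n M,\,r+b_n M\,]\subset(0,R)$, where $[-M,M]$ contains the support of $K_1$; at $r=R$ only the left part of the kernel mass survives, and that case must be handled either by a one--sided kernel convention or by a direct check that the truncated tail is negligible. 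I expect this boundary bookkeeping, rather than the GNZ computation, to be the only real obstacle.
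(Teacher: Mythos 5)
Your proposal is correct and follows essentially the same route as the paper: the iterated GNZ formula for pairs, collapse of $\lambda(u,\bX)\lambda(v,\bX\cup\{u\})$ to $\beta^{\star 2}\exp(-\gamma(\|v-u\|))$ on the event $\widetilde h(u,\bX)\widetilde h(v,\bX)=1$ via the finite-range condition, stationarity plus polar coordinates, dominated convergence for the limit, and a Taylor expansion killed by the vanishing moments of $K_1$ for the $\mathcal{O}(b_n^\alpha)$ rate (you expand the sphere-integrated function $J\cdot\varPhi$ directly, the paper expands the two factors separately before integrating over $\SS^{d-1}$ -- an immaterial difference). Your remark about the endpoint $r=R$, where the cut-off $\ind{\|v-u\|\le R}$ truncates the kernel mass, is a legitimate point that the paper's proof silently glosses over.
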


\begin{theorem} \label{varo}
Let $\gamma$ be pairwise interaction potential
defined in  \eqref{modelle} satisfying condition \eqref{RG}.
Let $K_1$ kernel function satisfying Condition  $K(1,\alpha)$ for all $\alpha\geq 1$ such that
$\int_{\R}K_1^2(\rho) \dd \rho<\infty$ .
For all $r\in (0,R]$, we have,
\[
\lim_{n\rightarrow \infty}{b_n|W_{n\ominus 2R}|}\Var (\widehat{R}_n(r))=
\frac{2\beta^{\star 2}}{\sigma_d r^{d-1}}J(r)\exp(-\gamma(r))\int_{\R}K_1^2(\rho) \dd \rho .
\]
 \end{theorem}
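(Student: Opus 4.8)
\noindent\emph{Proof strategy.} I would expand $\Var(\widehat R_n(r))=\E[\widehat R_n(r)^2]-(\E\widehat R_n(r))^2$ and evaluate both terms with the iterated Georgii--Nguyen--Zessin (GNZ) formula. Write $\widehat R_n(r)=(b_n|W_{n\ominus 2R}|\sigma_d)^{-1}\sum^{\neq}_{u,v\in\bX}g_n(u,v;\bX)$, where $g_n(u,v;\bX)$ denotes the summand in \eqref{range24}; then $\widehat R_n(r)^2$ is $(b_n|W_{n\ominus 2R}|\sigma_d)^{-2}$ times a sum over four points of $\bX$, which I would split according to the overlap $\{u,v\}\cap\{u',v'\}\in\{0,1,2\}$ of the two pairs and treat each piece by the GNZ formula of order $2$ or $4$. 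Here the relevant Papangelou densities are
\[
\lambda^{(k)}(x_1,\dots,x_k;\bX)=\beta^{\star k}\exp\Big(-\sum_{i<j}\gamma(\|x_i-x_j\|)\Big)\exp\Big(-\sum_{w\in\bX}\sum_i\gamma(\|w-x_i\|)\Big).
\]
The structural fact that drives the computation is that $g_n(u,v;\bX)\neq 0$ forces $\{u,v\}$ to be an \emph{isolated pair} of $\bX$ --- no other point of $\bX$ lies within distance $R$ of $u$ or of $v$ --- so every ``one common point'' product vanishes identically, and after the GNZ formula the four-point integrand inherits an explicit constraint that the two pairs are more than $R$ apart. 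Combined with the finite range \eqref{RG}, this makes both the $\bX$-interaction and all cross-pair interactions in $\lambda^{(k)}$ disappear, so everything reduces to empty-space functionals: the one-pair integrand is $\beta^{\star 2}e^{-\gamma(\|v-u\|)}\|v-u\|^{-(d-1)}K_1((\|v-u\|-r)/b_n)\mathbf 1(\|v-u\|\le R)\widetilde F(o,v-u)$ up to the window indicator, with the analogous two-pair integrand carrying a joint empty-space factor $\widetilde F^{(4)}=\E[\widetilde h(u,\bX)\widetilde h(v,\bX)\widetilde h(u',\bX)\widetilde h(v',\bX)]$.

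The leading contribution is the diagonal $\{u,v\}=\{u',v'\}$, i.e. the two sub-cases $(u',v')=(u,v)$ and $(u',v')=(v,u)$, each of which collapses to a single integral. Substituting $w=v-u$ and passing to polar coordinates, the factor $\|v-u\|^{-2(d-1)}$ combines with the Jacobian $\rho^{d-1}$ to leave $\rho^{-(d-1)}$, the surface integral $\int_{\SS^{d-1}}\widetilde F(o,\rho v)\,\dd v$ yields $J(\rho)$, and the change of variable $\rho=r+b_n t$ together with dominated convergence --- using that $K_1$ is bounded with bounded support, $\int_\R K_1^2<\infty$, and that $\rho\mapsto J(\rho)e^{-\gamma(\rho)}$ is continuous at $r$ --- shows this piece equals $\frac{2\beta^{\star 2}}{b_n|W_{n\ominus 2R}|\sigma_d r^{d-1}}J(r)e^{-\gamma(r)}\int_\R K_1^2(\rho)\,\dd\rho$ up to $o\big((b_n|W_{n\ominus 2R}|)^{-1}\big)$; the factor $2$ is the two sub-cases, the mismatch between $\mathbf 1_{W_{n\ominus 2R}}(u)$ and $\mathbf 1_{W_{n\ominus 2R}}(v)$ being $o(|W_{n\ominus 2R}|)$ because $(W_n)$ is a sequence of growing cubes. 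Multiplying by $b_n|W_{n\ominus 2R}|$ this is exactly the asserted limit, so it remains to show every other contribution is $o\big((b_n|W_{n\ominus 2R}|)^{-1}\big)$.

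Since the coincidence set of $\R^{4d}$ is Lebesgue-null, the ``disjoint'' part of $\E[\widehat R_n(r)^2]$ minus $(\E\widehat R_n(r))^2$ equals $(b_n|W_{n\ominus 2R}|\sigma_d)^{-2}\int_{\R^{4d}}(\Psi_4-\Psi_2\otimes\Psi_2)$, where $\Psi_2,\Psi_4$ are the reduced one- and two-pair integrands; by the isolated-pair constraint the difference of integrands is the nonnegative weight of $\Psi_2\otimes\Psi_2$ times either $\widetilde F^{(4)}-\widetilde F\otimes\widetilde F$ (when the two pairs lie more than $R$ apart) or $-\widetilde F\otimes\widetilde F$ (on the remaining, geometrically bounded region). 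When the two pairs lie within a fixed multiple of $R$ of each other, a crude bound --- each radial integral costs $O(b_n)$, the $u$-integral $O(|W_{n\ominus 2R}|)$, the $u'$-integral $O(1)$ --- gives $O(|W_{n\ominus 2R}|b_n^2)$, hence $O(|W_{n\ominus 2R}|^{-1})$ after the prefactor; on the complementary set the cross $\gamma$'s vanish and the bracket is $\Cov\big(\widetilde h(u,\bX)\widetilde h(v,\bX),\widetilde h(u',\bX)\widetilde h(v',\bX)\big)$, which is identically zero for the Poisson process and, for the pairwise Gibbs models considered, decays integrably in the separation of the two avoided regions through the finite-range (spatial Markov) property, again contributing $o(|W_{n\ominus 2R}|b_n^2)$. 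Collecting the three estimates and multiplying by $b_n|W_{n\ominus 2R}|$ gives the theorem. I expect the main obstacle to be precisely this last step: making the four-point term cancel $(\E\widehat R_n(r))^2$ to the precision $o\big((b_n|W_{n\ominus 2R}|)^{-1}\big)$ --- the near-diagonal geometry is routine, but the long-range portion genuinely needs decorrelation of the empty-space functional, which is automatic for Poisson and otherwise supplied by the finite range \eqref{RG}.
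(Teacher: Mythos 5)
Your strategy follows the same outline as the paper's proof: expand the variance of the double sum over $\bX$ according to whether the two pairs share two, one, or zero points, evaluate each piece with the iterated GNZ formula, and extract the limit from the fully diagonal piece via polar coordinates, the substitution $\rho=r+b_nt$, and dominated convergence (the paper packages the decomposition as its Corollary~1 and calls the four resulting pieces $A_1,\dots,A_4$; your diagonal computation giving $\frac{2\beta^{\star2}}{\sigma_d r^{d-1}}J(r)e^{-\gamma(r)}\int_\R K_1^2$ is exactly its treatment of $A_1$). You diverge on the cross terms, and in both places your version is sharper. First, you observe that the one-common-point products vanish identically: $\widetilde h(u,\bX\setminus\{u,v\})\widetilde h(v,\bX\setminus\{u,v\})$ forces every other point of $\bX$ --- in particular the shared point's second partner --- to lie farther than $R$ from $u$ and from $v$, while the other pair's indicator demands distance at most $R$. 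This disposes of that term exactly; the paper instead rewrites the cross sum with both factors evaluated at $\bX\setminus\{u,v,w\}$ (not the same object for this $f$), obtains a nonvanishing integral $A_2$, and argues it is negligible. Second, you correctly pair the zero-overlap four-point term with $(\E\widehat R_n(r))^2$ and reduce the question to the decay of $\widetilde F^{(4)}-\widetilde F\otimes\widetilde F$, whereas the paper treats $A_3$ and $A_4$ separately and asserts that each satisfies $b_n|W_{n\ominus 2R}|A_j\to0$ by dominated convergence --- which cannot be right as stated, since each of $A_3$ and $A_4$ is of order $(b_n|W_{n\ominus 2R}|)^{0}$ (the $w$-integral of $|W_{n\ominus 2R}\cap(W_{n\ominus 2R}-w)|/|W_{n\ominus 2R}|$ over $\R^d$ is of order $|W_{n\ominus 2R}|$); only the difference $A_3-A_4$ is small.

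That said, the step you flag as the main obstacle is a genuine gap in your sketch, and it is the same gap the paper papers over. For the long-range portion you need $\Cov\big(\widetilde h(u,\bX)\widetilde h(v,\bX),\,\widetilde h(w,\bX)\widetilde h(y,\bX)\big)$ to be integrable in the separation of the two pairs. The finite range of $\gamma$ gives the spatial Markov property of $\bX$, but not by itself any decay of correlations of empty-space functionals: a finite-range Gibbs model can exhibit phase transition and long-range order, so ``supplied by the finite range'' is an assertion, not a proof. One needs an additional hypothesis (uniqueness of the Gibbs measure, a mixing condition, or an explicit covariance bound) to close this step. Up to that shared weakness, your proposal reproduces the paper's argument and improves on its handling of both overlap terms.
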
  

\subsection{Proof of Theorem \ref{esperance2}}
\begin{proof}
We define
\begin{equation*}
 \widetilde{L}(u_1,...,u_s,\bX)
= \widetilde h(u_1,\bX)... \widetilde h(u_s,\bX), \quad
\widetilde{F}(u_1,...,u_s)= \E[ \widetilde h(u_1,\bX)\cdot \cdot \cdot  \widetilde h(u_s,\bX)]
\end{equation*}
and
$
\widetilde J(||u_1||,...,||u_s||)=1\!\!1(||u_1|| \leq R,...,||u_s||\leq R).
$

The calculation of expectation and variance of $\widehat{R}_n(r)$ is based on the
iterated Georgii-Nguyen-Zessin (GNZ) formula, see  Papangelou \cite{Papangelou}:
{\small
\begin{align}\label{eq:gnz2}
\E\!\!\!\sum_{u_1,...,u_s\in{\bX}}^{\neq}\!\!\!h(u_1,...,u_s, \bX\setminus \{u_1,...,u_s\})
&=  \int\!\!\!...\!\!\!\int \E h(u_1,...,u_s,\bX)\lambda(u_1,...,u_s,\bX)\dd u_1...\dd u_n
\end{align}
}
for non-negative functions $h:(\R^d)^n\times N_{lf} \longrightarrow\R$,
where $\lambda(u_1,...,u_s,\bx)$ is Papangelou conditional intensity and
 is defined (not uniquely) by
\[
\lambda(u_1,...,u_s,\bx)= \lambda(u_1,\bx)\lambda(u_2,\bx\cup \{u_1\})
...\lambda(u_s,\bx\cup \{u_1,...,u_{s-1}\}). 
\]
Applying the preceding formula \eqref{eq:gnz2} for $s=2$, we derive
\begin{align*}
 \E\widehat{R}_n(r)
 &= \frac{1}{b_n|W_{n\ominus 2R}|\sigma_d}\\
&\E \int_{\R^{2d}} \frac{1\!\!1_{W_{n\ominus 2R}}(u)}{||v-u||^{d-1}}\widetilde J(||v-u||)
\widetilde L(u,v,\bX)K_1\bigg(\frac{||v-u||-r}{b_n}\bigg)\lambda(u,v,\bX)\dd u\dd v. 
\end{align*}

For an interaction radius $R$, the Papangelou conditional intensity satisfies 
\[
 \lambda(u,\bx)=\lambda(u,\emptyset) \quad \mbox {for all} \quad \bx \quad \mbox {with} \quad d(u,\bx)>R
\]
since points further  away from $u$ than $R$ do not contribute to the Papangelou conditional intensity at $u$.
Using the finite range property \eqref{RG}, we get
\begin{align*}
 \E\widehat{R}_n(r)
&=\frac{\beta^{\star 2}}{b_n|W_{n\ominus 2R}|\sigma_d}\\
&\E \int_{_{\R^{2d}}}
\frac{1\!\!1_{W_{n\ominus 2R}}(u)}{||v-u||^{d-1}}\widetilde J(||v-u||)
\widetilde L(u,v,\bX)K_1\bigg(\frac{||v-u||-r}{b_n}\bigg)\varPhi(||v-u||)\dd u\dd v \\ 
&=\frac{\beta^{\star 2}}{b_n\sigma_d}\int_{\R^d} \frac{\widetilde J(||s||)}{||s||^{d-1}} \E[\widetilde L(o,s,\bX)]K_1\bigg(\frac{||s||-r}{b_n}\bigg) \varPhi(||s||)\dd s. 
\end{align*}

Recall a property of the integration theory (see Briane and Pagès \cite{M-Briane} or Rudin \cite{W-Rudin}).
Let $\SS^{d-1}$  be the unit sphere in $\R^d$, i.e. 
$
\SS^{d-1}=\{u\in\R^d:||u||=1\},
$
then for any Borel function $f: \R^d\longrightarrow \R_{+}$,
\[
 \int_{\R^d}f(u) \dd u=\int_{0}^{\infty}\int_{\SS^{d-1}}f(rz)r^{d-1}\sigma_d \dd r\dd z.
\]
By combining the above result, we get so:
\begin{align*}
 \E\widehat{R}_n(r)&=\frac{{\beta^\star}^2}{\sigma_d}
\int^{\infty}_{-r/b_n}\int_{\SS^{d-1}}\widetilde J(b_n \varrho+r)\widetilde{F}(o,(b_n \varrho+r)v)
 K_1(\varrho) \varPhi(b_n\varrho+r)\sigma_d\dd \varrho \dd v. 
\end{align*}
With bounded support on the kernel function and by dominated convergence theorem,
we get as $n\rightarrow \infty$, $\E\widehat{R}_n(r)\longrightarrow \beta^{\star 2}J(r)\exp(-\gamma(r))$.
Now, we are going to prove the second part of the Theorem \ref{esperance2}. 
We have a product of two functions  $\widetilde{F}(o,(b_n \varrho+r)v)\varPhi(b_n \varrho+r)$ and we approximate
each one of them with a Taylor formula up to a certain $\alpha$. 
We use Taylor's formula to obtain for $\ n\rightarrow \infty$,
\begin{align*}
\varPhi(b_n \varrho+r)
&=\varPhi(r)+\sum_{k=1}^{\alpha-1}
\frac{(b_n \varrho)^{k}}{k!}\frac{d\varPhi}{dr}(r){dr}
+\frac{b_n^{\alpha}}{\alpha!}\frac{d^\alpha \varPhi}{dr^\alpha}(r+b_n\varrho\theta)
\end{align*}
and
\begin{align*}
\widetilde{F}(o,(b_n \varrho+r)v)
&=\widetilde{F}(o,rv)+\sum_{k=1}^{\alpha-1}
\frac{(b_n \varrho)^{k}}{k!}\frac{d\widetilde{F}(o,rv)}{dr}{dr}
+\frac{b_n^{\alpha}}{\alpha!}\frac{d^\alpha \widetilde{F}}{dr^\alpha}(o,(r+b_n\varrho\theta)v). 
\end{align*}
So we denote this product by $T_n(rv,r)$,
then we have {as} $\ n\rightarrow \infty$
\begin{align*}
 \widetilde{F}(o,(b_n \varrho+r)v)\varPhi(b_n \varrho+r)
 &=\widetilde{F}(o,rv)\varPhi(r)+\sum_{k=1}^{\alpha-1}T_n(rv,r)(b_n\varrho)^k+ \mathcal{O}(b_n^\alpha).
\end{align*}
It follows that,
\begin{align*}
 \E\widehat{R}_n(r)
&={\beta^{\star 2}} J(r)\varPhi(r)\\
&+{\beta^{\star 2}}\int_{\SS^{d-1}}\sum_{k=1}^{\alpha-1}T_n(rv,r)b_n^k\dd v \int_{\R}\varrho^kK_1(\varrho)\dd \varrho\\
&+ \mathcal{O}(b_n^\alpha) \quad \mbox {as} \ n\rightarrow \infty.
\end{align*}
Together with Condition $K(1,\alpha)$ imply the second assertion of Theorem \ref{esperance2}.
\end{proof}
\subsection{Proof of Theorem \ref{varo}}
\begin{proof}
 The proof of Theorem \ref{varo} makes use of the following corollary.
\begin{Cor}\label{varr2}
Consider any  Gibbs point process $\bX$ in $\R^d$ with Papangelou conditional intensity $\lambda$.
For any non-negative, measurable  and symmetric function $f:\R^d\times \R^d\times N_{lf} \longrightarrow \R$, we have
\begin{align*}
& \Var\bigg(\sum_{u,v \in \bX}^{\neq}f(u,v,\bX\backslash\{u,v\})\bigg) \\
&=2\E\int_{\R^{2d}}f^2(u,v,\bX)\lambda(u,v,\bX)\dd u \dd v  \\
&+4\E\int_{\R^{3d}}f(u,v,\bX) f(v,w,\bX) \lambda(u,v,w,\bX)\dd u \dd v \dd w   \\
&+\E\int_{\R^{4d}}f(u,v,\bX)  f(w,y,\bX) \lambda(u,v,w,y,\bX)\dd u \dd v \dd w \dd y  \\
&-\int_{\R^{4d}}\E[f(u,v,\bX)\lambda(u,v,\bX)] \E[f(w,y,\bX) \lambda(w,y,\bX)]\dd u \dd v \dd w \dd y.
\end{align*}
\end{Cor}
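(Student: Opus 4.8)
The plan is to compute $\Var(S)$ for $S=\sum_{u,v\in\bX}^{\neq}f(u,v,\bX\setminus\{u,v\})$ from the identity $\Var(S)=\E[S^2]-(\E S)^2$. The easy half is $\E S$: applying the iterated GNZ formula \eqref{eq:gnz2} with $s=2$ gives $\E S=\int_{\R^{2d}}\E[f(u,v,\bX)\lambda(u,v,\bX)]\,\dd u\,\dd v$, and squaring this produces exactly the last (subtracted) term in the statement. All of the real work is the identification of $\E[S^2]$ with the first three terms, and all interchanges of sums, integrals and expectations below are licensed by $f\geq 0$ (everything lives in $[0,\infty]$, so Tonelli applies and no integrability hypothesis is needed to write the identity, which is then used together with the finiteness estimates of Theorem \ref{varo}).

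For $\E[S^2]$ I would expand $S^2=\sum_{u,v\in\bX}^{\neq}\sum_{w,y\in\bX}^{\neq}f(u,v,\bX\setminus\{u,v\})\,f(w,y,\bX\setminus\{w,y\})$ and split the double sum according to the size of the overlap $\{u,v\}\cap\{w,y\}$, which is $2$, $1$ or $0$. In the case $\{u,v\}=\{w,y\}$ there are two ordered choices of $(w,y)$ for each unordered pair, and symmetry of $f$ makes each contribute $f(u,v,\bX\setminus\{u,v\})^2$; GNZ with $s=2$ turns this block into $2\,\E\!\int f^2\lambda$. In the case of exactly one common point the three underlying points are distinct, and collapsing the eight ordered labelings of each unordered triple by symmetry of $f$ rewrites the block as $4\sum_{u,v,w}^{\neq}f(u,v,\cdot)f(v,w,\cdot)$, to which GNZ with $s=3$ applies. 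In the case of four distinct points the block is $\sum_{u,v,w,y}^{\neq}f(u,v,\cdot)f(w,y,\cdot)$ and GNZ with $s=4$ yields the third term. Collecting these and subtracting $(\E S)^2$ gives the claim.

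The delicate point is the bookkeeping of the configuration arguments, not the combinatorics. In $S^2$ the factor $f(u,v,\bX\setminus\{u,v\})$ still "sees" whichever other summation points lie in $\bX$, so before invoking \eqref{eq:gnz2} one must rewrite each block in the canonical form $\sum^{\neq}G(\text{points},\bX\setminus\{\text{all points}\})$ by adjoining the missing points to the configuration; e.g. in the disjoint block $f(u,v,\bX\setminus\{u,v\})=f\big(u,v,(\bX\setminus\{u,v,w,y\})\cup\{w,y\}\big)$. After applying GNZ one then recombines the resulting products, such as $\lambda(u,v,\bX\cup\{w,y\})\,\lambda(w,y,\bX)$, into $\lambda(u,v,w,y,\bX)$ using the consistency (cocycle) property of the Papangelou intensities together with the order‑independence of $\lambda(u_1,\dots,u_s,\bx)$ in the points $u_1,\dots,u_s$; likewise $f(u,v,\bX\cup\{w,y\})f(w,y,\bX\cup\{u,v\})$ is read as $f(u,v,\bX)f(w,y,\bX)$ under the usual convention that $\bX$ in a moment integrand carries the integration points. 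An equivalent and perhaps cleaner route, which I would actually prefer to write out, is to apply \eqref{eq:gnz2} twice in nested fashion: first to the outer sum over $(u,v)$, producing $\int\E[f(u,v,\bX)\lambda(u,v,\bX)\,S(\bX\cup\{u,v\})]\,\dd u\,\dd v$, and then inside the expectation to the expansion of $S(\bX\cup\{u,v\})$ over pairs drawn from $\bX\cup\{u,v\}$, which mechanically splits into the same three overlap cases (both new points, one new point, none) and, after a GNZ step of order $2$, $1$, $0$ respectively and one relabeling using symmetry of $f$ and of $\lambda_3$, delivers the coefficients $2$, $4$, $1$ with the correct integrands.
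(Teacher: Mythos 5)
Your proposal is correct and follows essentially the same route as the paper: the overlap-case expansion of $S^2$ into blocks with coefficients $2$, $4$, $1$ is exactly the decomposition \eqref{som} (attributed to Jolivet and Heinrich), followed by the iterated GNZ formula \eqref{eq:gnz2} with $s=2,3,4$ and subtraction of $(\E S)^2$. Your extra care about rewriting each block in the canonical form before applying GNZ, and the recombination of the resulting Papangelou factors, is a point the paper's proof passes over silently, but it does not change the argument.
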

\begin{proof} 
% After a simple calculation (see  Jolivet \cite{jolivet2} and Heinrich \cite{Heinrich2}) and taking into account 
% that $f$ is symmetrical, we obtain the following decomposition:
Consider the decomposition (see  Jolivet \cite{jolivet2} and Heinrich \cite{Heinrich2})
\begin{align}\label{som}
\bigg(\sum_{u,v \in \bX}^{\neq}f(u,v,\bX\backslash\{u,v\})\bigg)^2
&=2\sum_{u,v \in \bX}^{\neq}f^2(u,v,\bX\backslash\{u,v\})\nonumber\\
&+4\sum_{u,v,w \in \bX}^{\neq}f(u,v,\bX\backslash\{u,v,w\})f(v,w,\bX\backslash\{u,v,w\})\nonumber\\
&+\sum_{u,v,w,y \in \bX}^{\neq}f(u,v,\bX\backslash\{u,v,w,y\})f(w,y,\bX\backslash\{u,v,w,y\}). 
\end{align}
Applying the preceding (GNZ) formula \eqref{eq:gnz2} combining  with \eqref{som}, we obtain
\begin{align*}\label{sommee}
&\Var\sum_{u,v \in \bX}^{\neq}f(u,v,\bX\backslash\{u,v\})\\
&=\E \big(\sum_{u,v \in \bX}^{\neq}f(u,v,\bX\backslash\{u,v\})\big)^2- \big(\E\sum_{u,v \in \bX}^{\neq}f(u,v,\bX\backslash\{u,v\})\big)^2\\
&=2\E\int_{\R^{2d}}f^2(u,v,\bX)\lambda(u,v,\bX)\dd u \dd v  \\
&+4\E\int_{\R^{3d}}f(u,v,\bX) f(v,w,\bX) \lambda(u,v,w,\bX)\dd u \dd v \dd w   \\
&+\E\int_{\R^{4d}}f(u,v,\bX)  f(w,y,\bX) \lambda(u,v,w,y,\bX)\dd u \dd v \dd w \dd y  \\
&-\int_{\R^{4d}}\E[f(u,v,\bX)\lambda(u,v,\bX)] \E[f(w,y,\bX) \lambda(w,y,\bX)]\dd u \dd v \dd w \dd y.
\end{align*}
We obtain the desired result.
\end{proof}
Applying Corollary \ref{varr2} to this function
 \[
 f(u,v,\bX)= 
 \frac{1\!\!1_{W_{n\ominus 2R}}(u)}{||v-u||^{d-1}}\widetilde J(||v-u||) \widetilde L(u,v,\bX)
  K_1\bigg(\frac{||v-u||-r}{b_n}\bigg),
 \]
it is easily seen that 
$
 \Var\widehat{R}_n(r)= A_1+A_2+A_3-A_4,
$
where
\begin{align*}
 A_1
&= \frac{2}{{b^{2}_n|W_{n\ominus 2R}|^2}\sigma_d^2}\\
&\E \int_{\R^{2d}}\frac{1\!\!1_{W_{n\ominus 2R}}(u)}{||v-u||^{2(d-1)}} \widetilde J(||v-u||)\widetilde L(u,v,\bX)
 K_1^2\bigg(\frac{||v-u||-r}{b_n}\bigg)\lambda(u,v,\bX)\dd u\dd v,\\
A_2
&=\frac{4}{b^{2}_n|W_{n\ominus 2R}|^2\sigma_d^2}\E\int_{\R^{3d}}\frac{1\!\!1_{W_{n\ominus 2R}}(u)
1\!\!1_{W_{n\ominus 2R}}(v)}{||v-u||^{d-1}||v-w||^{d-1}}\widetilde J(||v-u||,||v-w||)\widetilde L(u,v,w,\bX)\\
& K_1\bigg(\frac{||v-u||-r}{b_n}\bigg)K_1\bigg(\frac{||v-w||-r}{b_n}\bigg)\lambda
(u,v,w,\bX)\dd u\dd v\dd w,\\
A_3
&=\frac{1}{b^{2}_n|W_{n\ominus 2R}|^2\sigma_d^2}\E\int_{\R^{4d}}
\frac{1\!\!1_{W_{n\ominus 2R}}(u)1\!\!1_{W_{n\ominus 2R}}(w)}{||v-u||^{d-1}||w-y||^{d-1}}\widetilde J(||v-u||,||w-y||) \widetilde L(u,v,w,y,\bX)  \\
&\times   K_1\bigg(\frac{||v-u||-r}{b_n}\bigg) K_1\bigg(\frac{||w-y||-r}{b_n}\bigg)\lambda(u,v,w,y,\bX)\dd u\dd v
 \dd w \dd y,
\end{align*}
and
\begin{align*}
A_4
&=\frac{1}{b^{2}_n|W_{n\ominus 2R}|^2\sigma_d^2}\\ 
&\bigg(\E \int_{\R^{d}}\frac{1\!\!1_{W_{n\ominus 2R}}(u)}{||v-u||^{d-1}} \widetilde J(||v-u||)\widetilde L(u,v,\bX)K_1\bigg(\frac{||v-u||-r}{b_n}\bigg)
\lambda(u,v,\bX)\dd u\dd v\bigg)^2.
\end{align*}
 \paragraph{}
The asymptotic behaviour  of the leading term $A_1$ is obtained by
applying the second order Papangelou conditional intensity given by:
\begin{equation*}
 \lambda(u,v,\bx)= \lambda(u,\bx)\lambda(v,\bx\cup\{u\})
\quad \mbox{for any} \ u, v \in \R^d \quad \mbox{and}\  \bx \in N_{lf}.
\end{equation*}

Using the finite range property \eqref{RG} for each function $ \lambda(u,\bx)$ and
$\lambda(v,\bx\cup\{u\})$, this implies that 
\begin{equation*}
 \lambda(u,\emptyset)=\beta^\star \quad \mbox {and} \quad
\lambda(v,\emptyset\cup\{u\})=\beta^\star \varPhi(||v-u||) \quad \mbox {for all} \quad u,v\in \R^d.
\end{equation*}
And by stationarity of $\bX$, it results

\begin{align*}
A_1
&= \frac{2\beta^{\star 2}}{b^{2}_n|W_{n\ominus 2R}|^2\sigma_d^2} \\
&\int_{\R^{2d}} \frac{1\!\!1_{W_{n\ominus 2R}}(u)}{||v-u||^{2(d-1)}} \widetilde J(||v-u||)\E[\widetilde H(o,v-u,\bX)]K_1^2\bigg(\frac{||v-u||-r}{b_n}\bigg)
\varPhi(||v-u||)\dd u\dd v \\ 
&= \frac{2\beta^{\star 2}}{b_n|W_{n\ominus 2R}|\sigma_d^2} \int^{\infty}_{-r/b_n}\int_{\SS^{d-1}} 
{\frac{\widetilde J(b_n\varrho+r)}{(b_n\varrho+r)^{d-1}}}
{\widetilde{F}(o,(b_n \varrho+r)w)K_1^2(\varrho)}{}\varPhi(b_n\varrho+r) \dd\varrho \dd \sigma (w).
\end{align*}
Dominated convergence theorem and assumption of $K_1$ imply for all $r\in (0, R]$
\[\lim_{n\rightarrow\infty}{b_n|W_{n\ominus 2R}|} A_1= \frac{2\beta^{\star 2}}{\sigma_d r^{d-1}}J(r)\varPhi(r)\int_{\R}K_1^2(\rho) \dd \rho.\]

We will now show that all other integrals to $\Var\widehat{R}_n(r)$  converge to zero.
For the asymptotic behaviour  of the second term $A_2$, we remember the third order Papangelou conditional intensity by
\begin{equation*}\label{uvw}
 \lambda(u,v,w,\bx)
=\lambda(u,\bx)\lambda(v,\bx \cup \{u\}) \lambda(w,\bx\cup\{u,v\})
\end{equation*}
for any $u, v, w \in \R^d$ and  $\bx \in N_{lf}$.
Since $\bX$ is a point process to interact in pairs, the interaction terms due to triplets or higher order 
are equal to one, i.e.
the potential  $\gamma(\by)=0$   when $n(\by) \geq 3$, for $\emptyset\neq \by\subseteq \bx$.
Using the finite range property \eqref{RG} for each function
$\lambda(u,\bx)$, $\lambda(v,\bx \cup \{u\})$ and 
$ \lambda(w,\bx\cup\{u,v\})$ and after a elementary calculation, we have
\begin{equation*}\label{eq3}
\lambda(u,v,w,\emptyset)=
 \left\{
  \begin{array}{rl}
  \beta^{\star3}\varPhi(||v-u||)\varPhi(||w-v||) & \quad\textrm{if}\quad  d(u,w)<R \\
  \beta^{\star3}\varPhi(||v-u||)       & \quad\textrm{otherwise}.
  \end{array}
  \right.
\end{equation*}
Which ensures that $\lambda(u,v,w,\emptyset)$
is a function that depends only variables $||v-u||,||w-v||$, denoted by
$\varPhi_1(||v-u||,||w-v||)$.

According to the stationarity of $\bX$, it follows that
\begin{align*}
A_2
&=\frac{4}{b^{2}_n|W_{n\ominus 2R}|^2\sigma_d^2}
\E \int_{\R^{3d}} \frac{1\!\!1_{W_{n\ominus 2R}}(u)1\!\!1_{W_{n\ominus 2R}}(v)}{||v-u||^{d-1}||v-w||^{d-1}}
 \widetilde J(||v-u||,||v-w||)\widetilde L(u,v,w,\bX)\\
&\times \varPhi_1(||v-u||,||w-v||)K_1\bigg(\frac{||v-u||-r}{b_n}\bigg) K_1\bigg(\frac{||v-w||-r}{b_n}\bigg) \dd u\dd v \dd w \\
&=\frac{4}{|W_{n\ominus 2R}|\sigma_d^2} \int^{\infty}_{-r/b_n} \int^{\infty}_{-r/b_n}\int_{\SS^{d-1}}
\int_{\SS^{d-1}}\frac{|W_{n\ominus 2R}\cap (W_{n\ominus 2R}-(b_n \varrho+r)z)|}{|W_{n\ominus 2R}|}\\
& \times \widetilde{F}(o,(b_n \varrho+r)z,(b_n \varrho^{\prime}+r)z^\prime)
\varPhi_1(b_n \varrho+r,b_n \varrho^{\prime}+r) 
K_1(\varrho)K_1(\varrho^\prime)\dd \varrho \dd \varrho^\prime \dd \sigma_d(z) \dd \sigma_d(z^{\prime}). 
\end{align*}
The asymptotic behaviour of the leading term $A_2$ is obtained by applying the dominated convergence theorem.
When multiplied by $b_nW_{n\ominus 2R}|$, we get
$\lim_{n\rightarrow\infty}{b_n|W_{n\ominus 2R}|} A_2=0$.

Next we introduce the finite range property \eqref{RG} and reasoning analogous with the foregoing on
$\lambda(u,v,w,y,\emptyset)$.  which ensures that $\lambda(u,v,w,y,\emptyset)$ is a 
function that depends only variables $||v-u||,||y-w ||,||w-u ||,||w-v||)$, denoted by 
$\varPhi_2(||v-u||,||y-w ||,||w-u ||,||w-v||)$. We find that
{\small
\begin{align*}
A_3
&=\frac{1}{b^{2}_n|W_{n\ominus 2R}|^2\sigma_d^2}\E\int_{\R^4}
\frac{1\!\!1_{W_{n\ominus 2R}}(u)1\!\!1_{W_{n\ominus 2R}}(w)}{||v-u||^{d-1}||w-y||^{d-1}}\widetilde J(||v-u||,||w-y||)\widetilde L(u,v,w,y,\bX)  \\
&\times  K_1\bigg(\frac{||v-u||-r}{b_n}\bigg)K_1\bigg(\frac{||w-y||-r}{b_n}\bigg)
\lambda(u,v,w,y,\bX)\dd u\dd v\dd w\dd y\\
&=\frac{1}{|W_{n\ominus 2R}|\sigma_d^2} 
 \int_{\R^{d}} \int^{\infty}_{-r/b_n}\int^{\infty}_{-r/b_n}\int_{\SS^{d-1}} \int_{\SS^{d-1}}  \frac{|W_{n\ominus 2R}\cap (W_{n\ominus 2R}-w) |}
{|W_{n\ominus 2R}|}K_1(\varrho)K_1(\varrho^\prime)\\
&\times \varPhi^\star_2(b_n\varrho+r,b_n\varrho^\prime+r,||w||,||(b_n\varrho+r)z-w||)\dd w \dd \varrho \dd\varrho^\prime \dd \sigma_d(z) \dd \sigma_d(z^\prime).
\end{align*}
}
Where

$
\varPhi^\star_2(b_n\varrho+r,b_n\varrho^\prime+r,||w||,||(b_n\varrho+r)z-w||)
=
\widetilde J(b_n\varrho+r,b_n\varrho^\prime+r)\widetilde{F}(o,(b_n\varrho+r)z,(b_n\varrho^\prime+r)z^\prime)\varPhi_2(b_n\varrho+r,b_n\varrho^\prime+r,||w||,||(b_n\varrho+r)z-w||)
.$

By dominated convergence theorem, we get $\lim_{n\rightarrow\infty}{b_n|W_{n\ominus 2R}|} A_3=0$.

For asymptotic behaviour of the leading term  $A_4$, 
it then suffices to repeat the arguments developed previously to conclude the following result.
{\small
\begin{align*}
A_4
&= \frac{\beta^{\star4}}{b^{2}_n|W_{n\ominus 2R}|^2\sigma_d^2} \\
& \int_{\R^{4d}}1\!\!1_{W_{n\ominus 2R}}(u)1\!\!1_{W_{n\ominus 2R}}(w) \frac{\widetilde J(||v-u||,||w-y||)}{||v-u||^{d-1}||w-y||^{d-1}}
\E[\widetilde L(u,v,\bX)]\E[\widetilde L(w,y,\bX)]\\
&\times \varPhi(||v-u||) \varPhi(||y-w||) K_1\bigg(\frac{||v-u||-r}{b_n}\bigg)K_1\bigg(\frac{||w-y||-r}{b_n}\bigg)\dd u\dd v \dd w\dd y\\ 
&= \frac{\beta^4}{b^{2}_n|W_{n\ominus 2R}|\sigma_d^2}\\
&\int_{\R^{3d}} \frac{|W_{n\ominus 2R} \cap (W_{n\ominus 2R}-w)|}{|W_{n\ominus 2R}|||v-u||^{d-1}||w-y||^{d-1}} \widetilde J(||v||,||w-y||)\E[\widetilde H(o,v,\bX)] \E[\widetilde L(w,y,\bX)]\\
&\times \varPhi(||v||)\varPhi(||y-w||) K_1\bigg(\frac{||v||-r}{b_n}\bigg)K_1\bigg(\frac{||w-y||-r}{b_n}\bigg)\dd v \dd w\dd y\\ 
&=\frac{\beta^4}{|W_{n\ominus 2R}|\sigma_d^2} \\
&\times \int_{\R^{d}} \int^{\infty}_{-r/b_n}\int^{\infty}_{-r/b_n} \int_{\SS^{d-1}} \int_{\SS^{d-1}}\frac{|W_{n\ominus 2R}\cap (W_{n\ominus 2R}-w)|}{|W_{n\ominus 2R}|}
  \widetilde J(b_n \varrho+r,b_n \varrho^\prime+r)\widetilde{F}(o,(b_n \varrho+r)z) \\
 & \times \widetilde{F}o,(b_n \varrho^\prime+r)z^\prime) \varPhi(b_n\varrho+r) \varPhi(b_n \varrho^\prime+r)
  K_1(\varrho)K_1(\varrho^\prime)\dd w \dd \varrho \dd \varrho^\prime \dd \sigma_d(z) \dd \sigma_d(z^\prime).
\end{align*}
}
Then by dominated convergence theorem, we get $\lim_{n\rightarrow\infty}{b_n|W_{n\ominus 2R}|} A_4=0$.
\end{proof}
\section{Strong consistency}\label{rates}
\subsection{Rates uniform strong convergence of the kernel-type estimator}
Before realizing the strong consistency $\widehat{\varPhi}_n(r)$ we introduce some necessary definitions and notation.
A Young function $\psi$ is a real convex nondecreasing function
defined on $\R^{+}$ which satisfies
$\lim_{t\to\infty}\psi(t)=+\infty$ and $\psi(0)=0$. We define the
Orlicz space $L_{\psi}$ as the space of real random variables $Z$
defined on the probability space $(N_{lf}, \F, \P)$ such that
$E[\psi(\vert Z\vert/c)]<+\infty$ for some $c>0$. The Orlicz space
$L_{\psi}$ equipped with the so-called Luxemburg norm $\| .\|_{\psi}$ defined for any real random variable $Z$ by
\[
\| Z\|_{\psi}=\inf\{\,c>0\,;\,E[\psi(\vert Z\vert/c)]\leq 1\,\}
\]
is a Banach space. For more about Young functions and Orlicz
spaces one can refer to Krasnosel'skii and Rutickii \cite{K.R61}.
Let $\theta>0$. We denote by $\psi_{\theta}$ the Young function
defined for any $x\in\R^{+}$ by
\[
\psi_{\theta}(x)=\exp((x+\xi_{\theta})^{\theta})-\exp(\xi_{\theta}^{\theta})\quad\textrm{where}\quad
\xi_{\theta}=((1-\theta)/\theta)^{1/\theta}1\!\!1{\{0<\theta<1\}}.
\]
On the lattice $\Z^{d}$ we define the lexicographic order as
follows: if $i=(i_{1},...,i_{d})$ and $j=(j_{1},...,j_{d})$ are
distinct elements of $\Z^{d}$, the notation $i<_{lex}j$ means that
either $i_{1}<j_{1}$ or for some $p$ in $\{2,3,...,d\}$,
$i_{p}<j_{p}$ and $i_{q}=j_{q}$ for $1\leq q<p$. Let the sets
$\{V_{i}^{k}\,;\,i\in\Z^{d}\,,\,k\in\N^{\ast}\}$ be defined as
follows:
\[
V_{i}^{1}=\{j\in\Z^{d}\,;\,j<_{lex}i\},
\]
and for $k\geq 2$
\[
V_{i}^{k}=V_{i}^{1}\cap\{j\in\Z^{d}\,;\,\vert i-j\vert\geq
k\}\quad\textrm{where}\quad \vert i-j\vert=\max_{1\leq l\leq
d}\vert i_{l}-j_{l}\vert.
\]
For any subset $\Gamma$ of $\Z^{d}$ define
$\F_{\Gamma}=\sigma(\varepsilon_{i}\,;\,i\in\Gamma)$ and set
\[
E_{\vert
k\vert}(\varepsilon_{i})=E(\varepsilon_{i}\vert\F_{V_{i}^{\vert
k\vert}}),\quad k\in V_{i}^{1}.
\]
Denote $\theta(q)=2q/(2-q)$ for $0<q<2$ and by convention $1/\theta(2)=0$.
\paragraph{} 
Next we list a set of conditions which are needed to obtain (rates of) uniform strong consistency over some 
compact set $[r_1,r_2]$ in $(0,R]$ of the estimator $\widehat{R}_n(r)$ to the function 
 $\beta^{\star2}J(r)\varPhi(r)$. The following assumption is imposed:
   
\textbf{Condition $\Lp:$}
The kernel function $K$ is a Lipschitz condition, i.e. there exists a constant $\eta> 0$ such that
\[
\big|K_1(\rho)-K_1(\rho^\prime)\big|\leq \eta|\rho-\rho^\prime|  \quad\textrm{for all}\quad  \rho,\rho^\prime \in [r_1,r_2].
\]
\paragraph{}
Strong uniform consistency for the resulting estimator are obtained via assumptions of belonging
to  Orlicz spaces induced by exponential Young functions for stationary real random fields which allows us to 
derive the  Kahane-Khintchine inequalities by El Machkouri \cite{Machkouri02}. 
 Our results also carry through the most important particular case of Orlicz spaces random fields,
 we use the inequality follows from a Marcinkiewicz-Zygmund type inequality by Dedecker \cite{Dedecker01}.

Now, we split up the sampling window $W_{n\ominus 2R}$  into cubes such as
$W_{n\ominus 2R}=\displaystyle \cup_{i\in \Gamma_n}\Lambda_i$, where $\Lambda_i$ are centered at $i$
and assume that $\Gamma_n=\{-n,...,0,...,n\}^d$ increases towards $\Z^d$. We split up  $\widehat{R}_n(r)$ as follows:
\begin{equation*}\label{range24}
 \widehat{R}_n(r)= 
\frac{1}{b_n|W_{n\ominus 2R}|\sigma_d}\sum_{i\in \Gamma_n}R_k(r)
\end{equation*}
\begin{equation*}\label{eeqiso}
  R_k(r)=  \sum_{\begin{subarray}{c} u,v \in \bX \\ ||v-u||\leq R.
    \end{subarray}} 
 ^{\neq}\frac{1\!\!1_{\Lambda_k}(u)}{||v-u||^{d-1}}\widetilde h(u,\bX \setminus \{u,v\})
\widetilde h(v,\bX\setminus \{u,v\})K_1\bigg(\frac{||v-u||-r}{b_n}\bigg).
 \end{equation*}
Note for all  $k\in \Gamma_n$,
$
 \bar{R}_k=R_k(r)-\E R_k(r) \quad \mbox{and} \quad S_n=\sum_{k\in \Gamma_n}\bar{R}_k(r).
$
\begin{theorem}\label{consiso}
Under Conditions $K(1,\alpha)$ and $\Lp$. Further, assume that $J(r)\exp(-\gamma(r))$ has bounded 
and continuous partial derivatives of order $\alpha$ in $[r_1-\delta,r_2+\delta]$ for some $\delta>0$. 
\begin{fleuve}
 \item If there exists $0<q<2$ such that $\bar{R}_0\in\sL_{\psi_{\theta(q)}}$ and
\begin{equation}\label{centrage3}
\sum_{k\in V_{0}^{1}} \left\|\sqrt{\big\vert \bar{R}_kE_{\vert k\vert}(\bar{R}_0)\big\vert}\right\|_{\psi_{\theta(q)}}^{2}<\infty.
\end{equation}
Then
\[
\sup_{r_1\leq r\leq r_2} \big|\widehat{R}_n(r)-\beta^{\star2} J(r)\exp(-\gamma(r))\big|=\mathcal{O}_{a.s.}
\bigg(\frac{(\log n)^{1/q}}{b_n{n}^{d/2}}\bigg)+\mathcal{O}(b_n^\alpha)  \quad\textrm{as}\quad
 \ n\rightarrow \infty.
\]  
\item If $\bar{R}_0\in\sL^\infty$ and
\begin{equation}\label{centrage31}
\sum_{k\in V_{0}^{1}} \left\| \bar{R}_kE_{\vert k\vert}(\bar{R}_0)\right\|_{\infty}<\infty.
\end{equation}
Then
  \[
\sup_{r_1\leq r\leq r_2} \big|\widehat{R}_n(r)-\beta^{\star2} J(r)\exp(-\gamma(r))\big|=\mathcal{O}_{a.s.}
\bigg(\frac{(\log n)^{1/2}}{b_n{n}^{d/2}}\bigg)+\mathcal{O}(b_n^\alpha)  \quad\textrm{as}\quad
 \ n\rightarrow  \infty.
\] 
\item If there exists $p>2$ such that $\bar{R}_0\in \sL^p$ and
\begin{equation}\label{centrage32}
\sum_{k\in V_{0}^{1}} \left\| \bar{R}_kE_{\vert k\vert}(\bar{R}_0)\right\|_{\frac{p}{2}}<\infty.
\end{equation}
Assume that $b_{n}=n^{-q_{2}}(\log n)^{q_{1}}$ for some $q_{1},q_{2}>0$. Let $a,b\geq 0$ be fixed and if 
$
a(p+1)-d^2/2-q_2>1 \quad\textrm{and}\quad b(p+1)+q_1 >1.
$
Then
\[
\sup_{r_1\leq r\leq r_2}\big|\widehat{R}_n(r)-\beta^{\star2} J(r)\exp(-\gamma(r))\big|=\mathcal{O}_{a.s.}
\bigg(\frac{n^{a}(\log n)^{b}}{b_{n}{n}^{d/2}}\bigg)+\mathcal{O}(b_n^\alpha)
 \quad\textrm{as}\quad \ n\rightarrow  \infty.
\]
\end{fleuve}
\end{theorem}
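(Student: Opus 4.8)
The plan is to combine the bias estimate already furnished by Theorem \ref{esperance2} with a uniform (over $r\in[r_1,r_2]$) almost-sure bound on the stochastic fluctuation $\widehat{R}_n(r)-\E\widehat{R}_n(r)$. The decomposition
\[
\widehat{R}_n(r)-\beta^{\star2}J(r)\exp(-\gamma(r))
=\big(\widehat{R}_n(r)-\E\widehat{R}_n(r)\big)+\big(\E\widehat{R}_n(r)-\beta^{\star2}J(r)\exp(-\gamma(r))\big)
\]
isolates the two contributions. Under Condition $K(1,\alpha)$ and the smoothness hypothesis on $J\exp(-\gamma)$, the second bracket is $\mathcal{O}(b_n^\alpha)$ by Theorem \ref{esperance2}; since the Taylor remainder there is controlled by the $\sup$ of the order-$\alpha$ derivatives on $[r_1-\delta,r_2+\delta]$, the bound holds uniformly in $r\in[r_1,r_2]$. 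So the whole task reduces to showing
\[
\sup_{r_1\leq r\leq r_2}\big|\widehat{R}_n(r)-\E\widehat{R}_n(r)\big|
=\frac{1}{b_n|W_{n\ominus 2R}|\sigma_d}\sup_{r_1\leq r\leq r_2}|S_n(r)|
=\mathcal{O}_{a.s.}(\cdot),
\]
with the three rates corresponding to the three integrability regimes for $\bar R_0$.

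First I would discretize the supremum. Using Condition $\Lp$ (Lipschitz kernel) together with the $||v-u||^{d-1}$ in the denominator being bounded below on $[r_1,r_2]$, one shows $r\mapsto R_k(r)$ is Lipschitz with a constant of order $b_n^{-2}$ times a local point count; hence $r\mapsto S_n(r)$ is Lipschitz in $r$ with a random constant whose moments are controlled. Covering $[r_1,r_2]$ by $\mathcal{O}(b_n^{-2}n^{d})$ points and bounding the oscillation between grid points reduces $\sup_r|S_n(r)|$ to a maximum of $|S_n(r)|$ over finitely many $r$, at the cost of only logarithmic factors once the final rate is assembled.

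Next, for each fixed $r$, I would apply the maximal/moment inequalities for stationary random fields in Orlicz spaces: the Kahane–Khintchine inequality of El Machkouri \cite{Machkouri02} in regime \emph{1)} (where $\bar R_0\in\sL_{\psi_{\theta(q)}}$ and the projective series \eqref{centrage3} converges), its $\sL^\infty$ analogue in regime \emph{2)}, and the Marcinkiewicz–Zygmund-type inequality of Dedecker \cite{Dedecker01} in regime \emph{3)}; the hypotheses \eqref{centrage3}, \eqref{centrage31}, \eqref{centrage32} are exactly the weak-dependence conditions these require. Each gives $\|S_n(r)\|_{\psi_{\theta(q)}}$ (resp.\ $\|S_n(r)\|_\infty$, $\|S_n(r)\|_p$) of order $|W_{n\ominus 2R}|^{1/2}\asymp n^{d/2}$. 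Converting the norm bound into an almost-sure statement uniform over the grid is done by a Borel–Cantelli argument: in regime \emph{1)} the exponential tail from the $\psi_{\theta(q)}$-norm lets one absorb the cardinality of the grid into a $(\log n)^{1/q}$ factor; in regime \emph{2)} the bounded case gives $(\log n)^{1/2}$; in regime \emph{3)} one uses Markov's inequality at order $p$, and the polynomial budget $a(p+1)-d^2/2-q_2>1$, $b(p+1)+q_1>1$ is precisely what makes the resulting series over $n$ summable after inserting the grid size $\mathcal{O}(n^{d}b_n^{-2})$ and $b_n=n^{-q_2}(\log n)^{q_1}$. Dividing by $b_n|W_{n\ominus2R}|\sigma_d\asymp b_n n^{d}$ turns the $n^{d/2}$ into $1/(b_n n^{d/2})$, yielding the stated $\mathcal{O}_{a.s.}$ terms, and adding back the $\mathcal{O}(b_n^\alpha)$ bias completes the proof.

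The main obstacle is the uniformity in $r$: one must verify that the random Lipschitz constant of $r\mapsto S_n(r)$ has the right order in each integrability class (so that the chaining/grid error is genuinely negligible against the main term) and that the bias remainder in Theorem \ref{esperance2} is uniform on the compact $[r_1,r_2]$ rather than pointwise — both hinge on $||v-u||$ staying away from $0$ on the relevant range and on the boundedness of the kernel and its Lipschitz constant, i.e.\ on Conditions $K(1,\alpha)$ and $\Lp$ being used jointly.
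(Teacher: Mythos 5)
Your proposal follows essentially the same route as the paper: the same bias/fluctuation decomposition with Theorem \ref{esperance2} handling the $\mathcal{O}(b_n^\alpha)$ term, a discretization of $[r_1,r_2]$ controlled via Condition $\Lp$ (the paper bounds the grid-oscillation term by a quantity $\widetilde{R}_n$ which it controls with the Nguyen--Zessin ergodic theorem, matching your ``random Lipschitz constant'' step), and then the Kahane--Khintchine inequality of El Machkouri in regimes \emph{1)} and \emph{2)} and Dedecker's Marcinkiewicz--Zygmund inequality in regime \emph{3)} at the grid points, concluded by Borel--Cantelli. The plan is correct and no further comparison is needed.
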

 \begin{remarque}\label{condition55}
From the Markov property of $\bX$ entails that for $i \neq 0 $ are not neighborhoods, then 
  $\bar{R}_i$ et $\bar{R}_o$ are conditionally independent, i.e
$\E[\bar{R}_0 |(X_{\Lambda_i} ; i \neq 0 ] = 0$.
 Since $\sigma(R_i,i\in V_0^k)$ is contained in $\sigma(X_{\Lambda_i},i\neq 0)$ for $k>l$, for some integer $l$, 
it follows immediately that conditions 
 \eqref{centrage3}, \eqref{centrage31}, \eqref{centrage32} are satisfied.
\end{remarque}
%\section{Proofs}
\subsection{Proof of Theorem \ref{consiso}}
\begin{proof}
 To establish rates of the uniform $\P $- a.s. convergence for the
 estimator $\widehat{R}_n(r)$, we apply a triangle inequality decomposition allows for
\begin{align*}\label{BB23}
\sup_{s_{i-1}\leq r\leq s_i}\bigg|\widehat{R}_n(r)-\E\widehat{R}_n(r)|
&\leq \sup_{s_{i-1}\leq r,\rho\leq s_i}\bigg| \widehat{R}_n(r)-\widehat{R}_n(\rho)\bigg|\\
&+ \sup_{s_{i-1}\leq r,\rho\leq s_i}\bigg| \E\widehat{R}_n(r)-\E\widehat{R}_n(\rho)\bigg|\\
&+ \sup_{s_{i-1}\leq \rho\leq s_i}\bigg| \widehat{R}_n(\rho)-\E\widehat{R}_n(\rho)\bigg|.
\end{align*}
The compact set $[r_1,r_2 ]$ is covered by the intervals $C_i=[s_{i-1}-s_i]$, where  $s_i=r_1+i(r_2-r_1)/N, i=1,...,N$.
Choosing $N$ as the largest integer satisfying $N\leq c/l_n$ and $l_n=r_nb_n^{2}$.
Under the condition $\Lp$, we deduce that there exists a constant $\eta> $ 0 such that 
 for any $n$ sufficiently large

\begin{align*}
\sup_{s_{i-1}\leq r,\rho\leq s_i}\bigg| \widehat{R}_n(r)-\widehat{R}_n(\rho)\bigg|
&\leq \frac{1}{b_n^2}\eta|r-\rho|\widetilde{R}_n\\
&\leq r_n\widetilde{R}_n
\end{align*}
where
\begin{equation*}
 \widetilde{R}_n=\frac{1}{\sigma_d|W_{n\ominus 2R}|}\sum_{\begin{subarray}{c} u,v \in \bX \\ ||v-u||\leq R
   \end{subarray}} 
^{\neq}\frac{1\!\!1_{W_{n\ominus 2R}}(u)}{||v-u||^{d-1}}\widetilde h(u,\bX \setminus \{u,v\})\widetilde h(v,\bX\setminus \{u,v\}). 
\end{equation*}
Follows from the last inequalities and the Nguyen and Zessin ergodic theorem \cite{A-NguZes79}: 
\[ \sup_{s_{i-1}\leq r,\rho\leq s_i}\bigg| \widehat{R}_n(r)-\widehat{R}_n(\rho)\bigg|=\mathcal{O}_{p.s.}(r_n)
\quad\textrm{as}\quad \ n\rightarrow  \infty.
\]
As well
\begin{equation*}
\sup_{s_{i-1}\leq r,\rho\leq s_i}\bigg| \E\widehat{R}_n(r)-\E\widehat{R}_n(\rho)\bigg|=
\mathcal{O}_{p.s.}(r_n) \quad\textrm{as}\quad \ n\rightarrow  \infty.
\end{equation*}
\begin{lemme}\label{compact5}
Assume that either \eqref{centrage3} holds for some  $0<q< 2$  such that
$\bar{R}_0\in\sL_{\psi_{\theta(q)}}$ and $r_n=(\log n)^{1/q}/b_n(\sqrt{n})^{d}$ 
or \eqref{centrage31} holds such that  $\bar{R}_0\in\sL_{\infty}$ and $r_n=(\log n)^{1/2}/b_n(\sqrt{n})^{d}$.
Then 
\[
\sup_{s_{i-1}\leq \rho\leq s_i}\bigg| \widehat{R}_n(\rho)-\E\widehat{R}_n(\rho)\bigg|=\mathcal{O}_{p.s.}(r_n)
\quad\textrm{as}\quad \ n\rightarrow  \infty.
\]
\end{lemme}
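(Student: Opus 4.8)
The plan is to prove Lemma \ref{compact5} by reducing the supremum over the continuum $[s_{i-1},s_i]$ to a maximum over the $N$-point grid $\{s_1,\dots,s_N\}$, and then bounding the deviation at each grid point via a Kahane--Khintchine-type moment inequality for the stationary random field $(\bar R_k(r))_{k\in\Gamma_n}$, finally summing the tail probabilities and invoking Borel--Cantelli. The oscillation between consecutive grid points is already controlled by the Lipschitz bound on $K_1$ (Condition $\Lp$) established just above the lemma statement, which gives
\[
\sup_{s_{i-1}\leq r,\rho\leq s_i}\bigl|\widehat R_n(r)-\widehat R_n(\rho)\bigr|=\mathcal{O}_{p.s.}(r_n),
\]
and likewise for the expectations; so the entire problem is to control $\max_{1\le i\le N}\bigl|\widehat R_n(s_i)-\E\widehat R_n(s_i)\bigr|$, i.e. $\max_i |S_n(s_i)|/(b_n|W_{n\ominus 2R}|\sigma_d)$ where $S_n(r)=\sum_{k\in\Gamma_n}\bar R_k(r)$.

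First I would fix a grid point $r=s_i$ and apply the Kahane--Khintchine inequality of El~Machkouri \cite{Machkouri02} for the Orlicz norm $\|\cdot\|_{\psi_{\theta(q)}}$ (resp. the $\sL^\infty$ or $\sL^p$ inequalities of Dedecker \cite{Dedecker01} in the other two regimes), applied to the centered stationary field $\bar R_k(r)$ indexed by $\Gamma_n=\{-n,\dots,n\}^d$. The projective/conditional-covariance hypothesis \eqref{centrage3} (resp. \eqref{centrage31}, \eqref{centrage32}) is exactly the input those inequalities require — by Remark \ref{condition55} these sums are in fact finite by the Markov property of $\bX$ — and they yield a bound of the form $\|S_n(r)\|_{\psi_{\theta(q)}}\le C|\Gamma_n|^{1/2}=C\,(2n+1)^{d/2}$, uniformly in $r\in[r_1,r_2]$ since the constant only involves $\|\bar R_0\|_{\psi_{\theta(q)}}$ and the series in \eqref{centrage3}, both bounded uniformly in $r$ on the compact. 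From the definition of the Luxemburg norm, a bound on $\|S_n(r)\|_{\psi_{\theta(q)}}$ translates into an exponential tail: $\P(|S_n(r)|>t)\le 2\exp(-(t/\|S_n(r)\|_{\psi_{\theta(q)}})^{\theta(q)})$ up to the shift $\xi_{\theta(q)}$.

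Next I would take a union bound over the $N\le c/l_n$ grid points. With the choice $r_n=(\log n)^{1/q}/(b_n(\sqrt n)^d)$ one sets the threshold $t=\varepsilon\, b_n|W_{n\ominus 2R}|\sigma_d\, r_n\asymp \varepsilon (\log n)^{1/q} n^{d/2}$; then each term $\exp(-(t/C n^{d/2})^{\theta(q)})$ is $\exp(-c'\varepsilon^{\theta(q)}(\log n)^{\theta(q)/q})$, and since $\theta(q)/q>1$ for $0<q<2$ — here $\theta(q)=2q/(2-q)$ so $\theta(q)/q=2/(2-q)>1$ — this is summable in $n$ even after multiplying by $N=\mathcal{O}(n^{d/2}(\log n)^{-1/q}/b_n)$, which grows only polynomially. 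Hence $\sum_n \P\bigl(\max_i|S_n(s_i)|>\varepsilon\,b_n|W_{n\ominus 2R}|\sigma_d r_n\bigr)<\infty$, and Borel--Cantelli gives $\max_i|\widehat R_n(s_i)-\E\widehat R_n(s_i)|=\mathcal{O}_{p.s.}(r_n)$. The $\sL^\infty$ case is identical with $\theta=\infty$ effectively giving a sub-Gaussian tail and $r_n=(\log n)^{1/2}/(b_n(\sqrt n)^d)$; the $\sL^p$ case uses Markov's inequality at order $p$ instead of the exponential tail, which is why one there needs the extra polynomial factor $n^a(\log n)^b$ and the arithmetic conditions $a(p+1)-d^2/2-q_2>1$, $b(p+1)+q_1>1$ to make the union bound summable.

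\textbf{The main obstacle} is the passage from the Orlicz/moment bound on a single $S_n(r)$ to a bound holding \emph{simultaneously} over all $N$ grid points with the right logarithmic rate: one must check that the number of grid points $N\asymp 1/l_n=1/(r_nb_n^2)$ grows only polynomially (so that $\log N\lesssim\log n$), that the tail exponent $\theta(q)/q=2/(2-q)$ is genuinely $>1$ so a single power of $\log n$ suffices, and that the constant in the Kahane--Khintchine inequality is truly uniform in $r$ over the compact $[r_1,r_2]$ — this last point relies on the continuity of $r\mapsto\|\bar R_0(r)\|_{\psi_{\theta(q)}}$ and of the series in \eqref{centrage3}, which follows from the boundedness of $K_1$ and its Lipschitz property together with the finite-range structure, but should be stated carefully. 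Combining this with the oscillation bounds $\mathcal{O}_{p.s.}(r_n)$ from Condition $\Lp$ via the three-term triangle decomposition displayed before the lemma then yields the claimed $\sup_{s_{i-1}\le\rho\le s_i}|\widehat R_n(\rho)-\E\widehat R_n(\rho)|=\mathcal{O}_{p.s.}(r_n)$.
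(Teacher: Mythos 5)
Your proposal follows essentially the same route as the paper's proof: split $\widehat R_n(r)-\E\widehat R_n(r)$ as $S_n(r)/(b_n|W_{n\ominus 2R}|\sigma_d)$ with $S_n(r)=\sum_{k\in\Gamma_n}\bar R_k(r)$, bound the Orlicz norm of $S_n$ by $\mathcal{O}(n^{d/2})$ via El Machkouri's Kahane--Khintchine inequality under \eqref{centrage3} (resp. \eqref{centrage31}), convert this into an exponential tail by the Chernoff/Markov inequality for the Young function, and conclude by Borel--Cantelli; the oscillation over each cell $[s_{i-1},s_i]$ is handled by Condition $\Lp$ exactly as in the displayed triangle decomposition. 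You are in fact more explicit than the paper about the union bound over the $N\le c/l_n$ grid points, which the paper only carries out in the $\sL^p$ case. The one point that needs correcting is the form of the tail you extract: El Machkouri's Theorem 1 controls $\Vert S_n\Vert_{\psi_{q}}$ (equivalently, yields a tail of order $\exp(-(t/\Vert S_n\Vert_{\psi_q}+\xi_q)^{q})$) in terms of the $\psi_{\theta(q)}$-norms of $\bar R_0$ and of $\sqrt{|\bar R_k E_{|k|}(\bar R_0)|}$ appearing in \eqref{centrage3}; it does not give a bound on $\Vert S_n\Vert_{\psi_{\theta(q)}}$ itself. Consequently, with $t\asymp\varepsilon(\log n)^{1/q}n^{d/2}$ the tail at a fixed grid point is $\exp(-c\varepsilon^{q}\log n)=n^{-c\varepsilon^{q}}$, which is only polynomially small; summability of the union bound over the polynomially many grid points then comes from choosing $\varepsilon$ sufficiently large (as the paper does), not from the super-logarithmic exponent $(\log n)^{\theta(q)/q}$ with $\theta(q)/q>1$ that you invoke. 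Since the conclusion is an $\mathcal{O}_{p.s.}(r_n)$ statement, taking $\varepsilon$ large is harmless and the argument closes exactly as you describe after this substitution; the $\sL^\infty$ case is the same with $q=2$.
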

\begin{proof}
For $\varepsilon>0$, using Markov's inequality, we get
\hspace*{-18cm}
 {\small
\begin{align*}
& \P\left(\vert\widehat{R}_n(r)-\E\widehat{R}_n(r)\vert >\varepsilon r_n\right)
= \P\left(\vert S_{n}\vert>\varepsilon r_{n}b_nn^d\right)\nonumber\\
&\leq \exp\bigg[-\left(\frac{\varepsilon\,r_nb_nn^d}{||S_{n}||_{\psi_{\theta(q)}}}+\xi_{q}\right)^{q}\bigg]
\E\exp\bigg[ \left(\frac{|S_n|}{||S_{n}||_{\psi_{\theta(q)}}}+\xi_{q}\right)^q \bigg].
\end{align*}}

Therefore, we assume that there exists a real $0<q<2$, such that  $\bar{R}_0 \in \sL_{\psi_{\theta(q)}}$
 and using Kahane-Khintchine inequalities (cf. El Machkouri \cite{Machkouri02}, Theorem 1), we have
\hspace*{-18cm}
 {\small
 \begin{align*}
 \P\left(\vert\widehat{R}_n(r)-\E\widehat{R}_n(r)\vert >\varepsilon r_n\right)
&\leq\P\left(\vert S_{n}\vert>\varepsilon r_{n}b_nn^d\right)\nonumber\\
&\leq (1+e^{\xi_{q}^{q}})\,\exp\bigg[-\left(\frac{\varepsilon\,r_nb_nn^d}
{M(\sum_{i\in \Gamma_n}b_{i,q}(\bar{R}))^{1/2}}+\xi_{q}\right)^{q}\bigg]
\end{align*}}
denote
\begin{equation*}\label{biq}
b_{i,q}(\bar{R})=\big\|\bar R_0\big\|_{\psi_{\theta(q)}}^{2}+\sum_{k\in V_{0}^{1}}\left\|\sqrt{\big\vert \bar{R}_kE_{\vert k\vert}(\bar{R}_0)\big\vert}\right\|_{\psi_{\theta(q)}}^{2}.
\end{equation*}
We derive that if condition \eqref{centrage3} holds, then there exist
constant $C>0$ and so if $r_n=(\log n)^{1/q}/b_n(\sqrt{n})^{d}$
\begin{equation*}\label{unif1}
 \sup_{r_1\leq r\leq r_2}\P(\vert\widehat{R}_n(r)-\E\widehat{R}_n(r)\vert >\varepsilon r_n)
\leq(1+e^{\xi_{q}^{q}})\,\exp\bigg[-\frac{\,\varepsilon^{q}\,\log
n} {C^{q}}\bigg].
\end{equation*}

Now, we will accomplish the second step the proof of Proposition \ref{consiso}.
 Using Kahane-Khintchine inequalities (cf. El Machkouri \cite{Machkouri02}, Theorem 1)
with $q=2$, such that $\bar{R}_0\in\sL_{\infty}$, we have
\begin{equation*}\label{ed}
 \P\left(\vert\widehat{R}_n(r)-\E\widehat{R}_n(r)\vert >\varepsilon r_n\right)
\leq 2\,\exp\bigg[-\left(\frac{\varepsilon\,r_nb_nn^d}
{M(\sum_{i\in \Gamma_n}b_{i,2}(\bar R))^{1/2}}\right)^{2}\bigg]
\end{equation*}
denote
\begin{equation*}\label{bi2}
b_{i,2}(\bar R)=\big\|\bar{R}_0\big\|_{\infty}^{2}+\sum_{k\in V_{0}^{1}}\big\| \bar{R}_{k}E_{\vert k\vert}(\bar{R}_0)\big\|_{\infty}.
\end{equation*}
We derive that if condition \eqref{centrage31} holds and so if $r_n=(\log n)^{1/2}/b_n(\sqrt{n})^{d}$, there exists 
$C>0$ such that
\begin{equation*}\label{unif2}
 \sup_{r_1\leq r\leq r_2}\P(\vert\widehat{R}_n(r)-\E\widehat{R}_n(r)\vert >\varepsilon r_n)
\leq 2\,\exp\bigg[-\frac{\,\varepsilon^{2}\,\log
n} {C^{2}}\bigg].
\end{equation*}
choosing $\varepsilon$ sufficiently large, 
therefore, it follows with Borel-Cantelli's lemma
\[
\P(\lim\sup_{n\rightarrow\infty}\sup_{s_{i-1}\leq \rho\leq s_i}\bigg| \widehat{R}_n(\rho)-\E\widehat{R}_n(\rho)\bigg|>\varepsilon r_n)=0.
\]
\end{proof}
\paragraph{}
Now, we will accomplish the last step the proof of Theorem \ref{consiso}.
\begin{lemme}
Assume \eqref{centrage32} holds for some $p>2$  such that $\bar{R}_0\in\sL^p$ and
$b_{n}=n^{-q_{2}}(\log n)^{q_{1}}$ for some constants
$q_{1},q_{2}>0$. Let  Let  $a,b\geq 0$ be fixed and  denote $r_n=n^a (\log n)^b/b_n(\sqrt{n})^d$.
If
\[a(p+1)-d/2-q_2>1 \quad\textrm{et}\quad b(p+1)+q_1 >1,\]
then
\[
 \sup_{s_{i-1}\leq \rho\leq s_i}\bigg| \widehat{R}_n(\rho)-\E\widehat{R}_n(\rho)\bigg|=\mathcal{O}_{p.s}(r_n) \quad\textrm{as}\quad \ n\rightarrow  \infty.
\]
\end{lemme}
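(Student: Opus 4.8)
The plan is to reproduce the scheme of the proof of Lemma~\ref{compact5}, replacing the Kahane--Khintchine inequality for Orlicz norms by the Marcinkiewicz--Zygmund type inequality of Dedecker~\cite{Dedecker01}, which is the tool available under the polynomial moment assumption $\bar R_0\in\sL^p$. Recall that $W_{n\ominus 2R}=\cup_{i\in\Gamma_n}\Lambda_i$ with $\Gamma_n=\{-n,\dots,n\}^d$, so $|W_{n\ominus 2R}|\asymp|\Gamma_n|\asymp n^{d}$ and
\[
\widehat R_n(\rho)-\E\widehat R_n(\rho)=\frac{S_n(\rho)}{\sigma_d\,b_n\,|W_{n\ominus 2R}|},\qquad S_n(\rho)=\sum_{k\in\Gamma_n}\bar R_k(\rho).
\]
First I would fix $\varepsilon>0$ and $\rho\in[r_1,r_2]$ and apply Markov's inequality at order $p$:
\[
\P\big(|\widehat R_n(\rho)-\E\widehat R_n(\rho)|>\varepsilon r_n\big)=\P\big(|S_n(\rho)|>\varepsilon r_n\sigma_d b_n|W_{n\ominus 2R}|\big)\le\frac{\E|S_n(\rho)|^{p}}{\big(\varepsilon r_n\sigma_d b_n|W_{n\ominus 2R}|\big)^{p}},
\]
so that it only remains to control $\E|S_n(\rho)|^{p}$ uniformly in $\rho$.

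Next I would bound $\|S_n(\rho)\|_p$. Since $p>2$, $\bar R_0\in\sL^p$, and the projective quantities attached to the lexicographic past $V_0^{1}$ and the filtrations $\F_{V_i^{|k|}}$ are controlled through~\eqref{centrage32} (which holds in particular under the conditional-independence structure of $\bX$ recalled in Remark~\ref{condition55}), the Marcinkiewicz--Zygmund type inequality of Dedecker~\cite{Dedecker01} for stationary real random fields on $\Z^{d}$ provides a constant $C_p>0$ such that
\[
\|S_n(\rho)\|_p\le C_p\,|\Gamma_n|^{1/2}\Big(\|\bar R_0\|_p^{2}+\sum_{k\in V_0^{1}}\big\|\bar R_kE_{|k|}(\bar R_0)\big\|_{p/2}\Big)^{1/2},
\]
the bracketed quantity being finite by~\eqref{centrage32}. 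Inserting this into the Markov bound and using $|W_{n\ominus 2R}|\asymp|\Gamma_n|\asymp n^{d}$ together with $r_nb_nn^{d/2}=n^{a}(\log n)^{b}$, I obtain a constant $C>0$ such that, uniformly in $\rho\in[r_1,r_2]$,
\[
\P\big(|\widehat R_n(\rho)-\E\widehat R_n(\rho)|>\varepsilon r_n\big)\le\frac{C}{\varepsilon^{p}\,n^{ap}(\log n)^{bp}}.
\]

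Finally I would discretise and sum. As in the proof of Theorem~\ref{consiso}, cover $[r_1,r_2]$ by $N\le c/l_n$ intervals $C_i=[s_{i-1},s_i]$ of length at most $l_n=r_nb_n^{2}$; plugging in $b_n=n^{-q_2}(\log n)^{q_1}$ gives $l_n=n^{a-q_2-d/2}(\log n)^{b+q_1}$, hence $N\asymp n^{d/2+q_2-a}(\log n)^{-(b+q_1)}$. A union bound over the $N$ grid points then yields
\[
\P\Big(\max_{1\le i\le N}|\widehat R_n(s_i)-\E\widehat R_n(s_i)|>\varepsilon r_n\Big)\le\frac{C'}{\varepsilon^{p}}\,n^{-(a(p+1)-d/2-q_2)}(\log n)^{-(b(p+1)+q_1)}.
\]
Under the hypotheses $a(p+1)-d/2-q_2>1$ and $b(p+1)+q_1>1$ the right-hand side is the general term of a convergent series, so Borel--Cantelli gives $\max_{1\le i\le N}|\widehat R_n(s_i)-\E\widehat R_n(s_i)|=\mathcal{O}_{a.s.}(r_n)$. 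Combining this with the modulus-of-continuity estimates already established before the statement, namely $\sup_{s_{i-1}\le r,\rho\le s_i}|\widehat R_n(r)-\widehat R_n(\rho)|=\mathcal{O}_{a.s.}(r_n)$ and its counterpart for the expectations, via the triangle-inequality decomposition, one concludes $\sup_{s_{i-1}\le\rho\le s_i}|\widehat R_n(\rho)-\E\widehat R_n(\rho)|=\mathcal{O}_{a.s.}(r_n)$.

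I expect the main obstacle to be the step invoking Dedecker's inequality: pinning down the precise $d$-dimensional Marcinkiewicz--Zygmund statement and verifying that the projective hypotheses it requires (formulated through the lexicographic order and the filtrations $\F_{V_i^{|k|}}$) are genuinely implied by~\eqref{centrage32} and the Markov property of $\bX$. The remaining work --- the $\sL^p$ Markov inequality, the arithmetic relating $N$, $l_n$, $r_n$, $b_n$, and the Borel--Cantelli argument --- is routine, the only delicate point being to keep the exponents aligned so that the two stated conditions on $a$ and $b$ are exactly what forces the series to converge.
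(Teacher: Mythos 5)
Your proposal is correct and follows essentially the same route as the paper: Markov's inequality at order $p$, Dedecker's Marcinkiewicz--Zygmund type bound on $\E|S_n|^p$ under \eqref{centrage32}, the union bound over the $N\le c/l_n$ grid points with $l_n=r_nb_n^2$, the same exponent arithmetic yielding $n^{-(a(p+1)-d/2-q_2)}(\log n)^{-(b(p+1)+q_1)}$, and Borel--Cantelli. If anything, you are slightly more explicit than the paper in separating the pointwise bound, the union bound over grid points, and the modulus-of-continuity step.
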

\begin{proof}
Let  $p>2$ be fixed, such that $\bar{R}_0\in \sL^p$ and for any $\varepsilon>0$,
\begin{align*}
 \P(\vert\widehat{R}_n(r)-\E\widehat{R}_n(r)\vert >\varepsilon r_n)
&=\P\left(\vert S_{n}\vert>\varepsilon r_{n}b_nn^d\right)\\
&\leq\frac{\varepsilon^{-p}\E\vert S_{n}\vert^{p}}{r_{n}^{p}b_n^pn^{pd}}\\
&\leq\frac{\varepsilon^{-p}}{r_{n}^{p}b_n^pn^{pd}}\left(2p\sum_{i\in \Gamma_n}c_i(\bar{R})\right)^{p/2}.
\end{align*}
The last inequality follows from a Marcinkiewicz-Zygmund type inequality by Dedecker \cite{Dedecker01}, where
\begin{equation*}\label{ddd}
c_{i}(\bar{R})=\|\bar{R}_{i}\|_{{p}}^2+\sum_{k\in V_{i}^{1}}\|\bar{R}_{k}E_{\vert k-i\vert}(\bar{R}_{i})\|_{\frac{p}{2}}.
 \end{equation*}
Under assumption \eqref{centrage32} and with the stationarity of $\bX$, we derive that there exists $C>0$ such that
\begin{align*}
\P \left(\sup_{s_{i-1}\leq \rho\leq s_i}\bigg| \widehat{R}_n(\rho)-\E\widehat{R}_n(\rho)\bigg|>\varepsilon r_{n}\right)
&\leq N
\sup_{r_1\leq r\leq r_2}\P(\vert\widehat{R}_n(r)-\E\widehat{R}_n(r)\vert >\varepsilon r_n)\\
&\leq N\frac{\kappa\varepsilon^{-p}}{r_{n}^{p}b_n^p(\sqrt{n})^{pd}}.
\end{align*}
As $N\leq c/l_n$ and $l_n=r_nb_n^{2}$, then
for  $r_n=n^a (\log n)^b/b_n(\sqrt{n})^d$ , it results for $n$ $\varepsilon$ sufficiently large,
\begin{align*}
\P\left( \sup_{s_{i-1}\leq \rho\leq s_i}\bigg| \widehat{R}_n(\rho)-\E\widehat{R}_n(\rho)\bigg|>\varepsilon r_{n}\right)
&\leq \frac{\kappa\varepsilon^{-p}}{n^{a(p+1)-d/2}(\log n)^{b(p+1)}b_n}\\
&\leq \frac{\kappa\varepsilon^{-p}}{n^{a(p+1)-d/2-q_2}(\log n)^{b(p+1)+q_1}}.
\end{align*}
For $a(p+1)-d/2-q_{2}>1$ et ${b(p+1)+q_1}>1$, we get for any $\varepsilon>0$ $\varepsilon >0$
\[\sum_{n\geq 1}\P\left( \sup_{s_{i-1}\leq \rho\leq s_i}\bigg| \widehat{R}_n(\rho)-\E\widehat{R}_n(\rho)\bigg|>\varepsilon r_{n}\right)<\infty.\]
\end{proof}
Considering these arguments the proofs of Theorem \ref{consiso} are completed,
it results from a direct application of the theorem of Borel-Cantelli and by Theorem \ref{esperance2}
we have
\begin{equation*}\label{B65}
\sup_{r_1\leq r\leq r_2}|\E\widehat{R}_n(r)-{R}(r)|=\mathcal{O}(b_n^\alpha) \quad\textrm{as}\quad
 \ n\rightarrow  \infty.
\end{equation*}
\end{proof}
\textbf{Acknowledgments}

%  I wish to thank the Editor, Associate Editor, and Referees for their very insightful
%  comments and feedback.
The research was supported by laboratory Jean Kuntzmann, Grenoble University, France.
\bibliographystyle{plain}
\bibliography{bibliographie}

\end{document}